\newtheorem{theorem}{Theorem}[section]
\newtheorem{lemma}{Lemma}[section]
\newtheorem{corollary}{Corollary}[section]
\newtheorem{proposition}{Proposition}[section]
\numberwithin{equation}{section}
\begin{document}
\title{On the weighted Geometric mean of accretive matrices}
\author{Yassine Bedrani, Fuad Kittaneh and Mohammed Sababheh}
\subjclass[2010]{15A45, 15A60, 47A30, 47A63, 47A64.}
\keywords{Sectorial matrix, accretive matrix, geometric mean, positive matrix, positive linear map.} \maketitle

\pagestyle{myheadings}
\markboth{\centerline {}}
{\centerline {}}
\bigskip
\bigskip
\begin{abstract}
In this paper, we discuss new inequalities for accretive matrices through non standard domains. In particular, we present several relations for $A^r$ and $A\sharp_rB$, when $A,B$ are accretive and $r\in (-1,0)\cup (1,2).$ This complements the well established discussion of such quantities for accretive matrices when $r\in [0,1],$ and provides accretive versions of known results for positive matrices. 
\end{abstract}
\section{Introduction}
Let $\mathcal{M}_n$ be the algebra of all  $n\times n$ complex matrices. For $A\in\mathcal{M}_n,$ recall the Cartesian decomposition  $$A=\Re A +\Im A,\; \text{with}\; \Re A=\dfrac{A+A^*}{2}\;\text{and}\;\Im A=\frac{A-A^*}{2i},$$
where $\Re A$ is the real part of $A$ and $\Im A$ is the imaginary part of $A$. We say that $A$ is  positive semidefinite (written $A\geq 0$) if $\left<Ax,x\right> \geq 0$ for all vectors $x\in\mathbb{C}^n,$ and that $A$ is  positive (written $A> 0$) if $\left<Ax,x\right> > 0$ for all nonzero vectors $x\in\mathbb{C}^n.$ The class of positive matrices will be denoted by $\mathcal{M}_n^{+}.$ A generalized class of matrices than that of positive ones is the so called accretive matrices. A matrix $A\in\mathcal{M}_n$ is said to be accretive when $\Re A>0.$  It is clear that when $A$ is positive, it is necessarily accretive. For two Hermitian matrices $A,B\in\mathcal{M}_n$, we say that $A\leq B$ (or $A< A$) if $B-A\geq 0$ (or $B-A> 0$). The relation $A\leq B$ defines a partial ordering on the class of Hermitian matrices.

 The numerical radius $w(A)$ and the operator norm $\|A\|$ of $A\in\mathcal{M}_n$ are defined, respectively, by
$$w(A)=\max\{|\left<Ax,x\right>|:x\in\mathbb{C}^n, \|x\|=1\}$$ and
$$\|A\|=\max\{|\left<Ax,y\right>|:x,y\in\mathbb{C}^n, \|x\|=\|y\|=1\}.$$

Recall that a norm $|||\cdot|||$ on $\mathcal{M}_n$ is unitarily invariant if $|||UAV|||=|||A|||$ for any $A\in\mathcal{M}_n$ and for all unitary matrices $U,V\in\mathcal{M}_n$.

For two  matrices $A,B\in\mathcal{M}_n^{+}$, the weighted geometric mean of $A$ and $B$ is defined as \cite{Furuta1} 
\begin{align}\label{geo_mean_positive}
A\sharp_rB=A^{\frac{1}{2}}\left(A^{-\frac{1}{2}}BA^{-\frac{1}{2}}\right)^{r}A^{\frac{1}{2}},\; \text{where}\;0\leq r\leq 1.
\end{align}
This matrix mean is one among many well defined matrix means. Other known and easily defined matrix means are the arithmetic and harmonic means for $A,B\in\mathcal{M}_n^{+}$, defined respectively by
$$A\nabla_rB=(1-r)A+rB, A!_rB=((1-r)A^{-1}+rB^{-1})^{-1},\; r\in[0,1].$$
When $r\not\in[0,1],$ we still define $A\sharp_r B$ and $A\nabla_rB$ as above, although these quantities do not fulfill the requirements for a matrix mean. For $A!_rB$, a well definiteness argument needs more discussion. Among the most basic inequalities in matrix means theory is that, when $A,B\in\mathcal{M}_n^{+}$,
\begin{equation}\label{amgm_ineq}
A!_rB\leq A\sharp_rB\leq A\nabla_rB,\; r\in[0,1].
\end{equation}
It is so much interesting that the inequality \eqref{amgm_ineq} is reversed when $r\not\in [0,1],$ when $A!_rB$ is well defined.

Another celebrated inequality is that, when $r\in[0,1]$ and $A,B\in\mathcal{M}_n^{+}$,
\begin{equation}\label{ando_ineq}
\Phi(A\sharp_rB)\leq \Phi(A)\sharp_r\Phi(B),
\end{equation}
where $\Phi:\mathcal{M}_n\to\mathcal{M}_k$ is a positive unital linear map. For more details about positive linear maps, we refer the reader to \cite{Bhatia1,Furuta}.

In $2016$, Fujii \cite{Fujii} proved that if $A,B \in\mathcal{M}_n^{+}$, then for any  positive unital linear map $\Phi$, it holds 
\begin{align}\label{Fujii}
\Phi(A\sharp_rB)\geq \Phi(A)\sharp_r\Phi(B),\;r\in(-1,0),
\end{align}
providing a reversed version of \eqref{ando_ineq}.

In this paper, we are interested in accretive matrices and how they behave under   $\sharp_r$, when $r\not\in [0,1].$

When studing accretive matrices, it is necessary to discuss  sectorial ones. A matrix $A\in\mathcal{M}_n$ is said to be sectorial if, for some $0\leq \alpha<\frac{\pi}{2}$, we have

$$W(A)\subset S_{\alpha}:=\{ z\in\mathbb{C},\Re z>0\;:|\Im z|\leq \tan\alpha\; \Re z\},$$ 
where $$W(A):=\{\left<Ax,x\right>:x\in\mathbb{C}^n, \|x\|=1\},$$
is the numerical range of $A$.

 When $W(A)\subset S_{\alpha},$ we simply write $A\in \mathcal{S}_{\alpha}.$	Further, in the sequel, it will be implicitly understood that the notions  $S_{\alpha}$ and $\mathcal{S}_{\alpha}$ are defined only when $0\leq \alpha<\frac{\pi}{2}.$

While the definition of $A\sharp_rB$ in \eqref{geo_mean_positive} is given for positive matrices, it is still valid for accretive ones, as we explain next.\\
Raissouli et. al. \cite{raissouli} defined the weighted geometric mean for two accretive matrices $A,B\in\mathcal{M}_n$ by 
\begin{align}\label{raissouli_def}
A\sharp_r B=\frac{\sin(r\pi)}{\pi}\int_{0}^{\infty}t^{r-1}(A^{-1}+tB^{-1})^{-1}dt,\;r\in(0,1).
\end{align}
This definition was motivated by Drury work in \cite{drury}, who triggered this idea by defining $A\sharp_{\frac{1}{2}}B.$ 

In \cite{raissouli}, it is shown that for accretive $A,B\in\mathcal{M}_n$ and $r\in(0,1),$
\begin{align}\label{raisso_inq}
\Re(A\sharp_r B)\geq \Re A\sharp_r \Re B.
\end{align}
Recently, F. Tan and H. Chen \cite{Tan} showed that when $A,B\in\mathcal{S}_\alpha$ and $r\in[0,1],$ then
\begin{align}\label{tan_inq}
\Re(A\sharp_r B)\leq \sec^2\alpha\;(\Re A\sharp_r \Re B),
\end{align}
as reversed version of \eqref{raisso_inq}. We notice that when $A,B> 0$ , both \eqref{raisso_inq} and \eqref{tan_inq} become identities.

It has been noted in \cite{bedr,drury} that the definition of $A\sharp_r B$ in \eqref{raissouli_def} is equivalent to 
\begin{align}\label{geo_mean_accr}
A\sharp_rB=A^{\frac{1}{2}}\left(A^{-\frac{1}{2}}BA^{-\frac{1}{2}}\right)^{r}A^{\frac{1}{2}},\;r\in[0,1].
\end{align}
The definition in \eqref{geo_mean_accr} will be carried out to   $r\in\mathbb{R}$ for accretive $A,B.$ From \eqref{geo_mean_accr}, we easily deduce that $$( A\sharp_rB)^{-1}=A^{-1}\sharp_rB^{-1}, r\in\mathbb{R},$$ for accretive $A,B.$

In \cite{bedr}, we presented a general discussion to extend this notion of matrix mean from the setting of positive matrices to accretive ones.

The main goal of this paper is to study the geometric mean $A\sharp_r B$ for two accretive matrices $A,B$, when $r\in(1,2)$ and $r\in(-1,0)$. This study has not been done in the literature, although it is well known for positive matrices. We will notice that many results will be reversed when the domain of $r$ changes from $[0,1]$ to $(-1,0)$ or $(1,2).$

\section{Preliminaries}
In this section, we list the different results that we will need in our work. These results can be found in the stated references. 
\begin{lemma}\cite{Lin 2,Drury1}\label{RA} 
 If $ A\in \mathcal{S}_{\alpha} $ , then
 \begin{center}
 $ \Re(A^{-1})\leq (\Re A)^{-1}\leq \sec\alpha\;\Re(A^{-1}). $
 \end{center}
\end{lemma}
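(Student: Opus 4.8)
The plan is to derive both inequalities from a single two-sided estimate for the congruence $A^{*}(\Re A)^{-1}A$, read off from the Cartesian decomposition. Write $H:=\Re A>0$ and $K:=\Im A=(\Im A)^{*}$, so $A=H+iK$. Because $\Re\langle Ax,x\rangle=\langle Hx,x\rangle$ and $\Im\langle Ax,x\rangle=\langle Kx,x\rangle$, the hypothesis $A\in\mathcal{S}_{\alpha}$ is precisely the operator sandwich $-\tan\alpha\,H\le K\le\tan\alpha\,H$; equivalently, the Hermitian matrix $L:=H^{-1/2}KH^{-1/2}$ obeys $\|L\|\le\tan\alpha$. The first thing I would record is the pair of identities
\begin{equation*}
A^{*}(\Re A)^{-1}A=\Re A+(\Im A)(\Re A)^{-1}(\Im A),\qquad \Re(A^{-1})=\bigl(A^{*}(\Re A)^{-1}A\bigr)^{-1},
\end{equation*}
which follow from the expansion $(H-iK)H^{-1}(H+iK)=H+KH^{-1}K$ together with $\Re(A^{-1})=A^{-1}(\Re A)A^{-*}$.

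Granting these identities, both target inequalities become, after taking inverses (which reverses the operator order), the single chain $\Re A\le A^{*}(\Re A)^{-1}A\le \sec^{2}\alpha\,\Re A$. The lower bound is immediate and needs only accretivity: the term $(\Im A)(\Re A)^{-1}(\Im A)=KH^{-1}K$ is positive semidefinite, so $A^{*}(\Re A)^{-1}A\ge\Re A$, and inverting yields $\Re(A^{-1})\le(\Re A)^{-1}$. For the upper bound I would transform by the congruence $X\mapsto H^{-1/2}XH^{-1/2}$: the inequality $KH^{-1}K\le c\,H$ is then equivalent to $L^{2}\le c\,I$, so the best admissible constant is $c=\|L\|^{2}$. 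Since $\|L\|\le\tan\alpha$, this gives $A^{*}(\Re A)^{-1}A=\Re A+KH^{-1}K\le(1+\tan^{2}\alpha)\Re A$, and inverting produces $(\Re A)^{-1}\le(1+\tan^{2}\alpha)\,\Re(A^{-1})$.

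The only genuine subtlety is the value of the constant, and the reduction fixes it unambiguously at $1+\tan^{2}\alpha=\sec^{2}\alpha$. This factor is sharp: already the scalar case $A=e^{i\theta}$ with $|\theta|\le\alpha$ realizes the ratio $(\Re A)^{-1}/\Re(A^{-1})=\sec^{2}\theta$. I would therefore prove the statement with the constant $\sec^{2}\alpha$; the sharper-looking $\sec\alpha$ cannot hold, as $A=1+i\in\mathcal{S}_{\pi/4}$ already gives $(\Re A)^{-1}=1$ while $\sec\tfrac{\pi}{4}\,\Re(A^{-1})=\tfrac{1}{\sqrt2}<1$.
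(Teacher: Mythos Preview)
Your argument is correct, and your diagnosis of the constant is right. The paper does not supply a proof of this lemma at all---it is quoted from the cited references in the preliminaries section---so there is nothing to compare your approach against. Your route via the identity $\Re(A^{-1})=\bigl(A^{*}(\Re A)^{-1}A\bigr)^{-1}$ and the expansion $A^{*}H^{-1}A=H+KH^{-1}K$ is the standard one and is carried out cleanly.

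More importantly, you have spotted a genuine misprint in the statement as recorded in the paper: the constant on the right should be $\sec^{2}\alpha$, not $\sec\alpha$. Your scalar counterexample $A=1+i\in\mathcal{S}_{\pi/4}$ settles this decisively. It is worth noting that the paper's own applications of this lemma (in the proofs of Theorems~\ref{inve_Rsharpr} and~\ref{inve_Rsharp-r}) actually invoke the inequality in the form $\Re(A^{-1})\ge\cos^{2}\alpha\,(\Re A)^{-1}$, i.e.\ with the correct $\sec^{2}\alpha$ constant, so the typo is confined to the displayed statement of the lemma and does not propagate into the proofs that use it.
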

\begin{lemma}\label{A!B>RA!RB}\cite{raissouli}
Let $ A, B\in\mathcal{S}_\alpha $. Then, for $r\in(0,1),$
 \begin{equation}
 \Re(A!_{r}B)\geq (\Re A)!_{r}(\Re B).
 \end{equation}
\end{lemma}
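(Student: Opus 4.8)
The plan is to deduce the inequality from the variational (infimal‑convolution) description of the weighted harmonic mean of two \emph{positive} matrices, tested against one carefully chosen decomposition of a vector; Lemma~\ref{RA} will not actually be needed.

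Write $P=\Re A$ and $Q=\Re B$; since $A,B$ are accretive we have $P,Q>0$. Put $X=(1-r)A^{-1}+rB^{-1}$, which is again accretive, hence invertible, so that $A!_rB=X^{-1}$ and $(\Re A)!_r(\Re B)=\bigl((1-r)P^{-1}+rQ^{-1}\bigr)^{-1}$. First I would record the elementary identity $\Re(T^{-1})=T^{-*}(\Re T)\,T^{-1}$, valid for any invertible $T$ (expand $\tfrac12\,T^{-*}(T+T^{*})T^{-1}$). Applying it to $X$, and to $A$ and $B$,
\[
\Re(A!_rB)=X^{-*}(\Re X)X^{-1},\qquad \Re X=(1-r)A^{-*}PA^{-1}+rB^{-*}QB^{-1},
\]
where I used $\Re(A^{-1})=A^{-*}PA^{-1}$ and $\Re(B^{-1})=B^{-*}QB^{-1}$.

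Next I would invoke the classical fact that the quadratic form of the weighted harmonic mean of positive matrices is an infimal convolution: for $P,Q>0$ and $r\in(0,1)$,
\[
\bigl\langle\bigl((\Re A)!_r(\Re B)\bigr)x,x\bigr\rangle=\min_{u+v=x}\Bigl\{\tfrac{1}{1-r}\langle Pu,u\rangle+\tfrac{1}{r}\langle Qv,v\rangle\Bigr\},
\]
a standard consequence of the Anderson--Trapp/Schur‑complement description of the parallel sum. The heart of the proof is to feed this minimum one well‑chosen competitor: given $x$, set $y=X^{-1}x$ and take $u=(1-r)A^{-1}y$, $v=rB^{-1}y$, so that $u+v=Xy=x$; then, using $A^{-*}PA^{-1}=\Re(A^{-1})$ and $B^{-*}QB^{-1}=\Re(B^{-1})$, a direct computation yields
\[
\tfrac{1}{1-r}\langle Pu,u\rangle+\tfrac{1}{r}\langle Qv,v\rangle=(1-r)\langle\Re(A^{-1})y,y\rangle+r\langle\Re(B^{-1})y,y\rangle=\langle(\Re X)y,y\rangle=\langle\Re(A!_rB)x,x\rangle.
\]
Hence $\bigl\langle\bigl((\Re A)!_r(\Re B)\bigr)x,x\bigr\rangle\le\langle\Re(A!_rB)x,x\rangle$ for every $x$, and since both matrices are Hermitian this is exactly $\Re(A!_rB)\ge(\Re A)!_r(\Re B)$.

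The point to watch is the logical direction of the argument. The tempting shortcut --- pass to inverses, note that $\Re\bigl((A!_rB)^{-1}\bigr)=(1-r)\Re(A^{-1})+r\Re(B^{-1})\le(1-r)P^{-1}+rQ^{-1}=\bigl((\Re A)!_r(\Re B)\bigr)^{-1}$ by Lemma~\ref{RA}, and then invert the inequality --- does not work, because $\Re(C^{-1})\le D^{-1}$ does \emph{not} force $\Re C\ge D$ for an accretive $C$. The variational route succeeds precisely because the distinguished decomposition $u=(1-r)A^{-1}y$, $v=rB^{-1}y$ attains the infimum value on the nose, so only the trivial ``$\ge\min$'' comparison is needed and no term‑by‑term domination. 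I expect isolating this competitor to be the sole genuinely nonroutine step; everything else is bookkeeping with $\Re(T^{-1})=T^{-*}(\Re T)T^{-1}$. Note also that only accretivity of $A,B$ is used, not the sectorial hypothesis.
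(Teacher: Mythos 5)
Your proof is correct. Note that the paper does not prove this lemma at all --- it is quoted verbatim from the reference [raissouli] as a preliminary --- so there is no in-paper argument to compare against; your write-up is in effect a self-contained derivation of the cited result. The two pillars both check out: the identity $\Re(T^{-1})=T^{-*}(\Re T)T^{-1}$ is an immediate computation, and the weighted Anderson--Trapp formula $\langle (P!_rQ)x,x\rangle=\min_{u+v=x}\{\tfrac{1}{1-r}\langle Pu,u\rangle+\tfrac{1}{r}\langle Qv,v\rangle\}$ follows from the unweighted parallel-sum case via $P!_rQ=\tfrac{P}{1-r}:\tfrac{Q}{r}$. The competitor $u=(1-r)A^{-1}y$, $v=rB^{-1}y$ with $y=X^{-1}x$ does evaluate exactly to $\langle\Re(A!_rB)x,x\rangle$, so the one-sided comparison ``$\min\leq$ value at a particular decomposition'' yields the claim; your remark that the tempting route through Lemma~\ref{RA} and inversion fails (because $\Re(C^{-1})\leq D^{-1}$ does not imply $\Re C\geq D$) is also accurate and worth keeping. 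One tiny wording quibble: your chosen decomposition does not ``attain the infimum'' of the variational problem in general --- it attains the target value $\langle\Re(A!_rB)x,x\rangle$, which dominates the infimum; the logic you actually use is the correct one. You are also right that only accretivity, not membership in $\mathcal{S}_\alpha$, is used.
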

\begin{lemma}\cite{Zhang}\label{norm}
Let $ A \in \mathcal{S}_{\alpha} $ and let $ |||\cdot||| $ be any unitarily invariant norm on $\mathcal{M}_n$. Then
\begin{center}
$ \cos\alpha\; ||| A||| \leq ||| \Re(A)||| \leq |||A|||.$
\end{center}
\end{lemma}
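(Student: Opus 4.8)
The plan is to establish the two inequalities separately; the right-hand one is purely formal, while the sectorial hypothesis is used decisively for the left-hand one. For $|||\Re A||| \le |||A|||$ I would use only that every unitarily invariant norm satisfies $|||A^*||| = |||A|||$: from $\Re A = \tfrac12(A+A^*)$ and the triangle inequality, $|||\Re A||| \le \tfrac12\bigl(|||A||| + |||A^*|||\bigr) = |||A|||$, and this step uses nothing about $A$ being sectorial.

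For $\cos\alpha\,|||A||| \le |||\Re A|||$, put $P := \Re A > 0$ and $Q := \Im A$, and observe that $A \in \mathcal{S}_\alpha$ is equivalent to $-\tan\alpha\,P \le Q \le \tan\alpha\,P$; hence the Hermitian matrix $K := P^{-1/2}QP^{-1/2}$ satisfies $\|K\| \le \tan\alpha$ and $A = P^{1/2}(I + iK)P^{1/2}$. Now $I+iK$ is normal and invertible with $(I+iK)^*(I+iK) = I + K^2$, so $I \le (I+K^2)^{1/2} \le \sec\alpha\,I$. Using the polar decomposition of the normal matrix $I+iK$, namely $I+iK = \Omega (I+K^2)^{1/2} = (I+K^2)^{1/2}\Omega$ with $\Omega$ unitary and commuting with $(I+K^2)^{1/2}$, and splitting $(I+K^2)^{1/2} = (I+K^2)^{1/4}(I+K^2)^{1/4}$ symmetrically about $\Omega$, I obtain a factorization $A = T\Omega T^*$ with $T := P^{1/2}(I+K^2)^{1/4}$ invertible and $\Omega$ unitary. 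Since $TT^* = P^{1/2}(I+K^2)^{1/2}P^{1/2}$, the bound $I \le (I+K^2)^{1/2} \le \sec\alpha\,I$ gives $P \le TT^* \le \sec\alpha\,P$, whence $|||TT^*||| \le \sec\alpha\,|||\Re A|||$ by monotonicity of unitarily invariant norms on positive matrices.

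It then remains to compare $|||A|||$ with $|||TT^*|||$, i.e.\ to delete the unitary factor $\Omega$. Writing the polar decomposition $T = WR$ with $W$ unitary and $R := (T^*T)^{1/2} > 0$, unitary invariance gives $|||A||| = |||T\Omega T^*||| = |||R\Omega R|||$ and $|||TT^*||| = |||R^2|||$, so the claim reduces to $|||R\Omega R||| \le |||R^2|||$. For this I would invoke Horn's inequality for singular values of products: for every $k$, $\prod_{j=1}^{k}s_j(R\Omega R) \le \prod_{j=1}^{k}s_j(R)\,s_j(\Omega R) = \prod_{j=1}^{k}s_j(R)^2 = \prod_{j=1}^{k}s_j(R^2)$, so $s(R\Omega R)$ is log-majorized, hence weakly majorized, by $s(R^2)$, and the Fan dominance principle yields $|||R\Omega R||| \le |||R^2|||$. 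Chaining the estimates, $|||A||| = |||R\Omega R||| \le |||R^2||| = |||TT^*||| \le \sec\alpha\,|||\Re A|||$, which is the asserted inequality. The genuinely delicate point is precisely this last inequality $|||R\Omega R||| \le |||R^2|||$: it is true but is not accessible by a naive sub-multiplicative estimate (which would only give $|||A||| \le \|T\|\,|||T^*||| = \|T\|\,|||T|||$, far too lossy), and it is the multiplicative --- equivalently, log-majorization --- form of Horn's inequality that is needed. By contrast, the constant $\cos\alpha$ enters only through the trivial operator bound $I \le (I+K^2)^{1/2} \le \sec\alpha\,I$.
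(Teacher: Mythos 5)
Your proof is correct. Note that the paper itself offers no argument for this lemma --- it is quoted verbatim from the reference [Zhang], whose title (``A matrix decomposition and its applications'') refers precisely to the factorization $A=P^{1/2}(I+iK)P^{1/2}$ with $P=\Re A$ and $K$ Hermitian, $\|K\|\le\tan\alpha$, that you rediscover; so your argument is essentially a reconstruction of the cited source rather than a divergence from the paper. All the steps check out: the equivalence $A\in\mathcal{S}_\alpha\iff -\tan\alpha\,P\le \Im A\le\tan\alpha\,P$, the normality of $I+iK$ with $|I+iK|=(I+K^2)^{1/2}$ sandwiched between $I$ and $\sec\alpha\,I$, the symmetric split giving $A=T\Omega T^*$, and the reduction to $|||R\Omega R|||\le|||R^2|||$. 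You are right that this last step is the delicate one and that Horn's log-majorization plus Fan dominance settles it; a marginally shorter route to the same point is the Cauchy--Schwarz inequality for unitarily invariant norms, $|||XY|||\le|||XX^*|||^{1/2}\,|||Y^*Y|||^{1/2}$, applied with $X=R$ and $Y=\Omega R$ (so that $XX^*=Y^*Y=R^2$), which yields $|||R\Omega R|||\le|||R^2|||$ in one line --- though that inequality is itself usually proved by the very log-majorization you invoke, so the two finishes are morally identical. The first inequality via $\Re A=\tfrac12(A+A^*)$ and $|||A^*|||=|||A|||$ is of course fine.
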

\begin{lemma}\cite{bedr} If $ A,B\in\mathcal{M}_n$ are accretive and $r\in(-1,0)$, then 
\begin{align}
A\sharp_{r}B=A(A^{-1}\sharp_{-r}B^{-1})A.\label{-t}
\end{align}
In particular \eqref{-t} holds when $A,B> 0$ .
\end{lemma}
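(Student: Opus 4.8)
The plan is to reduce the claimed identity to a single elementary fact about the holomorphic functional calculus, after stripping away the conjugating factors. I would start from the right-hand side. Since $A,B$ accretive implies $A^{-1},B^{-1}$ accretive, the identity $(A\sharp_{s}B)^{-1}=A^{-1}\sharp_{s}B^{-1}$ (recorded just above for every real $s$), applied with $s=-r$, gives $A^{-1}\sharp_{-r}B^{-1}=(A\sharp_{-r}B)^{-1}$. So it suffices to prove
$$A\sharp_{r}B=A\,(A\sharp_{-r}B)^{-1}A .$$
Writing $A\sharp_{-r}B=A^{1/2}Y^{-r}A^{1/2}$ with $Y:=A^{-1/2}BA^{-1/2}$ via \eqref{geo_mean_accr} extended to real exponents, and using that $A^{1/2}$ is invertible with $(A^{1/2})^{-1}=A^{-1/2}$, we get $(A\sharp_{-r}B)^{-1}=A^{-1/2}\big(Y^{-r}\big)^{-1}A^{-1/2}$. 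Hence everything comes down to the identity $\big(Y^{-r}\big)^{-1}=Y^{r}$: granting it and sandwiching by $A$, while using $A\cdot A^{-1/2}=A^{1/2}=A^{-1/2}\cdot A$, the right-hand side becomes $A^{1/2}Y^{r}A^{1/2}=A\sharp_{r}B$, as desired.

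It remains to establish $\big(Y^{-r}\big)^{-1}=Y^{r}$, which I view as the only real content of the lemma. The matrix $Y=A^{-1/2}BA^{-1/2}$ is similar to $A^{-1}B$, and if $A^{-1}Bx=\lambda x$ for some $x\neq0$ then $\langle Bx,x\rangle=\lambda\langle Ax,x\rangle$, so $\Re\langle Bx,x\rangle=\lambda\,\Re\langle Ax,x\rangle$; accretivity makes the left-hand side positive and $\Re\langle Ax,x\rangle>0$, which rules out $\lambda\in(-\infty,0]$. Thus $\sigma(Y)$ lies in the slit plane $\mathbb{C}\setminus(-\infty,0]$, on which the principal branches $z\mapsto z^{\pm r}$ are holomorphic; this is exactly what makes the extension of \eqref{geo_mean_accr} to $r\in\mathbb{R}$ legitimate. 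Since $z^{-r}z^{r}=\exp(-r\,\mathrm{Log}\,z)\exp(r\,\mathrm{Log}\,z)\equiv1$ there, multiplicativity of the Riesz--Dunford functional calculus gives $Y^{-r}Y^{r}=I$, i.e. $\big(Y^{-r}\big)^{-1}=Y^{r}$.

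The case $A,B>0$ is subsumed: then $Y$ is positive definite, $\sigma(Y)\subset(0,\infty)$, and the same string of equalities holds verbatim with the ordinary continuous functional calculus. The main obstacle is therefore only bookkeeping --- keeping the branch of the fractional powers consistent and checking that $\sigma(Y)$ avoids $(-\infty,0]$; no inequality or structure theory enters.
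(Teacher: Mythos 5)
The paper does not prove this lemma at all --- it is quoted from the authors' earlier preprint \cite{bedr} --- so there is no in-text argument to compare against. Your proof is correct and self-contained. Given the paper's convention that $A\sharp_r B:=A^{1/2}\bigl(A^{-1/2}BA^{-1/2}\bigr)^{r}A^{1/2}$ for all real $r$ and the recorded identity $(A\sharp_s B)^{-1}=A^{-1}\sharp_s B^{-1}$, the whole statement does reduce, as you say, to $\bigl(Y^{-r}\bigr)^{-1}=Y^{r}$ for $Y=A^{-1/2}BA^{-1/2}$, and your spectral localization $\sigma(Y)\subset\mathbb{C}\setminus(-\infty,0]$ (via eigenvalues of $A^{-1}B$ being quotients $\langle Bx,x\rangle/\langle Ax,x\rangle$ of points in the open right half-plane) is exactly what legitimizes the principal-branch functional calculus. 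One cosmetic remark: the line $\Re\langle Bx,x\rangle=\lambda\,\Re\langle Ax,x\rangle$ is valid only for real $\lambda$, but since you invoke it solely to exclude $\lambda\in(-\infty,0]$, which is a set of real numbers, the argument stands as written.
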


\begin{lemma}\cite{bhk}\label{nume_sharp_inq}   Let $ A, B\in\mathcal{S}_{\alpha}$. Then for $r\in[0,1],$
\begin{equation}
 w(A\sharp_{r}B)\leq \sec^{3}\alpha\;w^{1-r}(A)w^r(B).
\end{equation}
\end{lemma}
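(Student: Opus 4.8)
The plan is to peel off three factors of $\sec\alpha$ one at a time, reducing the statement to an elementary norm estimate for positive matrices. I would start by recording the basic geometry of the sector: if $C\in\mathcal{S}_\alpha$ then for a unit vector $x$ the scalar $\langle Cx,x\rangle$ lies in $S_\alpha$, so $|\langle Cx,x\rangle|^2=(\Re\langle Cx,x\rangle)^2+(\Im\langle Cx,x\rangle)^2\le\sec^2\alpha\,(\langle\Re C\,x,x\rangle)^2$; taking the maximum over $x$ and using $\Re C>0$ gives $w(C)\le\sec\alpha\,\|\Re C\|$, while trivially $\|\Re C\|=w(\Re C)\le w(C)$. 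Since $A,B\in\mathcal{S}_\alpha$ and $r\in[0,1]$ imply $A\sharp_rB\in\mathcal{S}_\alpha$, applying the first bound to $C=A\sharp_rB$ yields $w(A\sharp_rB)\le\sec\alpha\,\|\Re(A\sharp_rB)\|$.

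Next I would invoke the Tan--Chen inequality \eqref{tan_inq}, $\Re(A\sharp_rB)\le\sec^2\alpha\,(\Re A\sharp_r\Re B)$; both sides are positive semidefinite and the operator norm is monotone on $\mathcal{M}_n^{+}$, so $\|\Re(A\sharp_rB)\|\le\sec^2\alpha\,\|\Re A\sharp_r\Re B\|$. Then, since $\Re A\le\|\Re A\|\,I$ and $\Re B\le\|\Re B\|\,I$, monotonicity of the mean $\sharp_r$ on $\mathcal{M}_n^{+}$ gives $\Re A\sharp_r\Re B\le(\|\Re A\|I)\sharp_r(\|\Re B\|I)=\|\Re A\|^{1-r}\|\Re B\|^{r}I$, hence $\|\Re A\sharp_r\Re B\|\le\|\Re A\|^{1-r}\|\Re B\|^{r}$. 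Finally $\|\Re A\|\le w(A)$, $\|\Re B\|\le w(B)$, and $1-r,r\ge0$, so chaining the three estimates gives
$$w(A\sharp_rB)\le\sec^3\alpha\,\|\Re A\|^{1-r}\|\Re B\|^{r}\le\sec^3\alpha\,w^{1-r}(A)w^{r}(B),$$
which is the claim.

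The only input here that is not pure routine is the stability fact that $A,B\in\mathcal{S}_\alpha$ forces $A\sharp_rB\in\mathcal{S}_\alpha$ for $r\in[0,1]$: this is exactly what allows the passage from the numerical radius of $A\sharp_rB$ to the norm of $\Re(A\sharp_rB)$, and it is the step I would expect to be the main obstacle if it had to be proved from scratch. It is, however, part of the same circle of results as \eqref{raisso_inq} and \eqref{tan_inq} and is available in the literature, so in the write-up I would simply cite it; the remaining ingredients are the geometry of $S_\alpha$, monotonicity of $\sharp_r$ and of the operator norm on $\mathcal{M}_n^{+}$, and the scalar bound $\|\Re C\|\le w(C)$.
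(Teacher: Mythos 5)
Your argument is correct, but note that the paper itself gives no proof of this lemma: it is imported verbatim from the reference \cite{bhk} (``Numerical radii of accretive matrices'', submitted), so there is no in-paper argument to compare against. Your derivation is a legitimate self-contained route built from material the paper does state. Each of the three $\sec\alpha$ factors is accounted for cleanly: the first from the sector geometry $|\left<Cx,x\right>|\leq \sec\alpha\,\left<\Re C\,x,x\right>$ applied to $C=A\sharp_rB$, the second from the Tan--Chen bound \eqref{tan_inq} together with monotonicity of the operator norm on positive semidefinite matrices, and the third from $\|\Re A\|\leq w(A)$, $\|\Re B\|\leq w(B)$ after squeezing $\Re A\sharp_r\Re B\leq \|\Re A\|^{1-r}\|\Re B\|^{r}I$ via joint monotonicity of the geometric mean for $r\in[0,1]$. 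The one nontrivial external input, that $A,B\in\mathcal{S}_\alpha$ forces $A\sharp_rB\in\mathcal{S}_\alpha$ for $r\in[0,1]$, is correctly isolated; it is standard and follows from the integral representation \eqref{raissouli_def} by exactly the argument the paper itself uses in its Proposition 3.4 for the case $r\in(1,2)$ ($S_\alpha$ is stable under inversion, positive combinations, and integration against a positive measure), so citing it is unobjectionable. The only cosmetic caveat is that you should make sure the monotonicity of $\sharp_r$ you invoke is the two-variable (Kubo--Ando) monotonicity for positive matrices with $r\in[0,1]$, which is where the restriction on $r$ genuinely enters.
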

\begin{lemma}\cite{bhk} \label{wA^-1} Let $ A\in S_{\alpha}. $  Then
\begin{align}
 \cos^{3}\alpha\;w^{-1}(A)\leq w(A^{-1}).
\end{align}
\end{lemma}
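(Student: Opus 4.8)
The plan is to prove the equivalent inequality $w(A)\,w(A^{-1})\ge\cos^{3}\alpha$ (note that $A\in\mathcal{S}_\alpha$ is invertible and $w(A)>0$, so both sides of the stated inequality make sense), and for this I will need only Lemma~\ref{RA} together with two elementary facts. The first is that $\|\Re B\|\le w(B)$ for \emph{any} $B\in\mathcal{M}_n$: since $\Re B$ is Hermitian, $\|\Re B\|=w(\Re B)=\max_{\|x\|=1}|\langle(\Re B)x,x\rangle|=\max_{\|x\|=1}|\Re\langle Bx,x\rangle|\le\max_{\|x\|=1}|\langle Bx,x\rangle|=w(B)$. The second is that $\|P^{-1}\|=\lambda_{\min}(P)^{-1}\ge\lambda_{\max}(P)^{-1}=\|P\|^{-1}$ for every positive $P$.

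First I would apply the first fact with $B=A^{-1}$ to get $w(A^{-1})\ge\|\Re(A^{-1})\|$. Next, the right-hand inequality of Lemma~\ref{RA}, namely $(\Re A)^{-1}\le\sec\alpha\,\Re(A^{-1})$, can be rewritten as $\cos\alpha\,(\Re A)^{-1}\le\Re(A^{-1})$; since $\cos\alpha\,(\Re A)^{-1}$ is positive and the operator norm is monotone on the positive cone, this gives $\cos\alpha\,\|(\Re A)^{-1}\|\le\|\Re(A^{-1})\|$. Finally, applying the second fact to $P=\Re A$ (which is positive because $A\in\mathcal{S}_\alpha$) and then using $\|\Re A\|\le w(A)$ once more, we obtain $\|(\Re A)^{-1}\|\ge\|\Re A\|^{-1}\ge w(A)^{-1}$. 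Chaining these estimates,
\[
w(A^{-1})\ \ge\ \|\Re(A^{-1})\|\ \ge\ \cos\alpha\,\|(\Re A)^{-1}\|\ \ge\ \cos\alpha\,w(A)^{-1}\ \ge\ \cos^{3}\alpha\,w^{-1}(A),
\]
where the last step uses $0<\cos\alpha\le1$, hence $\cos^{3}\alpha\le\cos\alpha$. This is exactly the assertion, and the argument in fact yields the sharper constant $\cos\alpha$.

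I do not foresee any genuine obstacle here. The single substantive ingredient is Lemma~\ref{RA}, which is needed precisely because $\Re(A^{-1})$ and $(\Re A)^{-1}$ are not the same matrix in general, so the passage between them is what produces the cosine factor; everything else is routine bookkeeping with $\|\Re B\|\le w(B)$, with monotonicity of the operator norm on positive matrices, and with $\|P^{-1}\|\ge\|P\|^{-1}$. One could alternatively note that $A\in\mathcal{S}_\alpha$ forces $A^{-1}\in\mathcal{S}_\alpha$ (the sector $S_\alpha$ being a convex cone symmetric about the positive real axis) and work with $w(A^{-1})$ via that route, but this is not required for the proof above.
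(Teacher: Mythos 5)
Your argument is correct, and it is worth noting that the paper itself offers no proof to compare against: Lemma~\ref{wA^-1} is simply quoted from the authors' companion paper \cite{bhk}. Each step of your chain checks out. The identity $\langle(\Re B)x,x\rangle=\Re\langle Bx,x\rangle$ gives $\|\Re B\|=w(\Re B)\le w(B)$; the right-hand half of Lemma~\ref{RA} together with monotonicity of the operator norm on the positive cone gives $\cos\alpha\,\|(\Re A)^{-1}\|\le\|\Re(A^{-1})\|$ (here $\Re(A^{-1})>0$ because $A^{-1}$ is again sectorial, so both sides of the ordering are positive); and $\|(\Re A)^{-1}\|=\lambda_{\min}(\Re A)^{-1}\ge\|\Re A\|^{-1}\ge w(A)^{-1}$ is elementary. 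Chaining these yields $w(A^{-1})\ge\cos\alpha\,w^{-1}(A)$, which is strictly stronger than the stated bound since $\cos^{3}\alpha\le\cos\alpha$. So not only is your proof valid and self-contained (using only Lemma~\ref{RA} plus routine facts), it improves the constant from $\cos^{3}\alpha$ to $\cos\alpha$; the weaker exponent in \cite{bhk} presumably arises from a less direct route (e.g.\ passing through $\|A\|$ versus $\|\Re A\|$ via Lemma~\ref{norm}, which costs additional cosine factors). If you wanted to be maximally careful you could add one line justifying that $A^{-1}\in\mathcal{S}_\alpha$ (so that $\Re(A^{-1})>0$ and the norm comparison of the two positive matrices is legitimate), but as you observe this follows at once from $\langle A^{-1}x,x\rangle=\overline{\langle Ay,y\rangle}/\|\,\cdot\,\|^2$-type computations or simply from the invariance of the sector under inversion.
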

\begin{lemma}\cite{Furuta}\label{phi(ABA)} Let $\Phi$ be a positive linear map. Then for any matrices $A,B\in\mathcal{M}_n^{+}$,
\begin{align}
\Phi(B)\Phi(A)^{-1}\Phi(B)\leq\Phi(BA^{-1}B).
\end{align}

\end{lemma}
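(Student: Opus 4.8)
The plan is to reduce the asserted inequality to the Kadison--Schwarz inequality for unital positive linear maps, which guarantees that $\Psi(Y)^{2}\leq\Psi(Y^{2})$ for every self-adjoint $Y$ and every unital positive linear map $\Psi$. I shall assume throughout, as the statement implicitly requires, that $\Phi(A)$ is invertible; this is automatic when $\Phi$ is unital, since $A>0$ then forces $\Phi(A)>0$.

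First I would introduce the auxiliary map
\begin{align*}
\Psi(X)=\Phi(A)^{-1/2}\,\Phi\!\left(A^{1/2}XA^{1/2}\right)\Phi(A)^{-1/2}.
\end{align*}
It is linear; it is positive, because $X\geq 0$ forces $A^{1/2}XA^{1/2}\geq 0$ and hence $\Phi(A^{1/2}XA^{1/2})\geq 0$; and it is unital, because $\Psi(I)=\Phi(A)^{-1/2}\Phi(A)\Phi(A)^{-1/2}=I$.

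Next I would put $Y=A^{-1/2}BA^{-1/2}$, which is self-adjoint (indeed positive). Since $B=A^{1/2}YA^{1/2}$ and $BA^{-1}B=A^{1/2}Y^{2}A^{1/2}$, applying $\Phi$ gives
\begin{align*}
\Phi(B)=\Phi(A)^{1/2}\Psi(Y)\Phi(A)^{1/2}\quad\text{and}\quad \Phi(BA^{-1}B)=\Phi(A)^{1/2}\Psi(Y^{2})\Phi(A)^{1/2},
\end{align*}
and a short computation then shows $\Phi(B)\Phi(A)^{-1}\Phi(B)=\Phi(A)^{1/2}\Psi(Y)^{2}\Phi(A)^{1/2}$. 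Conjugating by the invertible self-adjoint matrix $\Phi(A)^{-1/2}$, which preserves the order, the desired inequality $\Phi(B)\Phi(A)^{-1}\Phi(B)\leq\Phi(BA^{-1}B)$ becomes equivalent to $\Psi(Y)^{2}\leq\Psi(Y^{2})$.

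Finally, since $\Psi$ is a unital positive linear map and $Y$ is self-adjoint, the Kadison--Schwarz inequality yields $\Psi(Y)^{2}\leq\Psi(Y^{2})$, which closes the argument. The point that needs care is precisely this last appeal: the general Schwarz inequality $\Phi(Z)^{*}\Phi(Z)\leq\Phi(Z^{*}Z)$ would require $2$-positivity of $\Phi$, whereas for a self-adjoint argument the inequality $\Psi(Y)^{2}\leq\Psi(Y^{2})$ holds for arbitrary unital positive linear maps, which is exactly the generality needed here. As an alternative route, when $\Phi$ is assumed completely positive one can argue directly: the block matrix $\begin{pmatrix}A & B\\ B & BA^{-1}B\end{pmatrix}=\begin{pmatrix}A^{1/2}\\ BA^{-1/2}\end{pmatrix}\begin{pmatrix}A^{1/2} & A^{-1/2}B\end{pmatrix}$ is positive semidefinite, applying $\Phi$ to each block preserves positivity, and a Schur complement argument then gives the conclusion.
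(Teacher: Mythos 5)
Your argument is correct. The paper itself gives no proof of this lemma --- it is quoted from the reference [Furuta] (the Mond--Pe\v{c}ari\'c monograph) --- so there is no in-paper argument to compare against; what you have written is a valid self-contained derivation. The reduction is sound: $\Psi(X)=\Phi(A)^{-1/2}\Phi(A^{1/2}XA^{1/2})\Phi(A)^{-1/2}$ is indeed linear, positive and unital whenever $\Phi(A)$ is invertible (which the statement implicitly requires), the identities $\Phi(B)=\Phi(A)^{1/2}\Psi(Y)\Phi(A)^{1/2}$ and $\Phi(BA^{-1}B)=\Phi(A)^{1/2}\Psi(Y^{2})\Phi(A)^{1/2}$ with $Y=A^{-1/2}BA^{-1/2}$ check out, and conjugation by $\Phi(A)^{-1/2}$ converts the claim into Kadison's inequality $\Psi(Y)^{2}\leq\Psi(Y^{2})$ for the self-adjoint $Y$. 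You are also right to flag the key subtlety: the block-matrix/Schur-complement route, which is the proof most often sketched for this inequality, silently uses $2$-positivity of $\Phi$, whereas your reduction to the self-adjoint Schwarz inequality works for an arbitrary positive linear map, which is exactly the generality the lemma asserts. This is in fact slightly stronger than what the paper needs, since the lemma is only ever invoked there for positive \emph{unital} maps.
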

\section{Main Results}
In this section, we present our main results. For organizational purpose, we divide this section into two subsections, where we treat the cases $r\in(1,2)$ and $r\in(-1,0)$ separately.
\subsection{{\color{blue}{The Case $r\in(1,2)$}}}\hfill\\ 
In this subsection, we discuss power inequalities and geometric connection, for $r\in(1,2).$ First, we notice the following simple identity.
\begin{proposition}\label{prop_simp}
Let $A$ be invertible and let $0<s<1.$ Then
 $$A^2(sI+(1-s)A)^{-1}=\dfrac{A}{1-s}-\dfrac{s}{1-s}(I!_sA).$$
\end{proposition}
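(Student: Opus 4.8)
The plan is to reduce this to a one–variable rational identity, exploiting the fact that every term that appears is a rational function of the single matrix $A$, so no genuine noncommutativity is involved. First I would record a standing remark on well-definedness: since
$$sI+(1-s)A=A\bigl((1-s)I+sA^{-1}\bigr),$$
the matrix $sI+(1-s)A$ is invertible exactly when $(1-s)I+sA^{-1}$ is, and in that case both sides of the asserted equality make sense (here $I!_sA=\bigl((1-s)I+sA^{-1}\bigr)^{-1}$ by the definition of the harmonic mean with $I^{-1}=I$). This hypothesis is automatic in the cases of interest, e.g.\ when $A$ is accretive, because then the spectrum of $A$ lies in the open right half-plane and $s+(1-s)\lambda\neq0$ for every eigenvalue $\lambda$.

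For the computation itself I would proceed as follows. Using the factorization above together with the fact that $A$ commutes with every rational function of $A$,
$$A^2(sI+(1-s)A)^{-1}=A^2\bigl((1-s)I+sA^{-1}\bigr)^{-1}A^{-1}=A\bigl((1-s)I+sA^{-1}\bigr)^{-1}=A\,(I!_sA).$$
It then remains to check that $A\,(I!_sA)=\dfrac{A}{1-s}-\dfrac{s}{1-s}(I!_sA)$, which is equivalent to $\bigl((1-s)A+sI\bigr)(I!_sA)=A$; and this is immediate since $(1-s)A+sI=A\bigl((1-s)I+sA^{-1}\bigr)=A\,(I!_sA)^{-1}$.

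Alternatively, and perhaps more transparently, since all the expressions involved are rational functions of the single matrix $A$ (with no other matrix present), it suffices, via the holomorphic functional calculus, to verify the identity with a scalar $a$ substituted for $A$ on the relevant resolvent set, i.e.\ the elementary identity $\dfrac{a^{2}}{(1-s)a+s}=\dfrac{a}{1-s}-\dfrac{s}{1-s}\cdot\dfrac{a}{(1-s)a+s}$. Either route is routine; the only point that requires a word of care is the invertibility of $sI+(1-s)A$ noted above, so there is no serious obstacle here — the proposition is a bookkeeping identity that will be used to transfer statements about $A^{2}$ (and hence, after the substitution $A\mapsto A^{-1/2}BA^{-1/2}$, about $A\sharp_{2-r}B$-type quantities) to statements about the harmonic mean.
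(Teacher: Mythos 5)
Your proof is correct and is essentially the paper's argument run in the opposite direction: both computations pivot on the identity $A(I!_sA)=A^2(sI+(1-s)A)^{-1}$ and the same elementary factorization through $(1-s)I+sA^{-1}$. The only addition is your remark on the invertibility of $sI+(1-s)A$ (which the paper's hypothesis ``$A$ invertible'' does not by itself guarantee, though it holds in all the accretive applications), and the scalar functional-calculus alternative, both of which are fine but do not change the substance.
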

\begin{proof} Notice that
\begin{align*}
\dfrac{A}{1-s}-\dfrac{s}{1-s}(I!_sA)=\left[ \dfrac{A}{1-s}((1-s)I+sA^{-1})-\dfrac{s}{1-s}\right](I!_sA)
=A(I!_sA)
=A^2(sI+(1-s)A)^{-1}.
\end{align*} 
This completes the proof. 
\end{proof}
\begin{proposition}\label{A^r}
Let $A\in\mathcal{M}_n$ be accretive and let $r\in(1,2).$ Then
$$A^r=\int_{0}^{1}A^2(sI+(1-s)A)^{-1}d\mu(s),\;{\text{where}}\;d\mu(s)=\frac{\sin(r-1)\pi}{\pi}\frac{s^{r-2}}{(1-s)^{r-1}}ds.$$
\end{proposition}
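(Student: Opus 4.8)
The plan is to reduce the matrix identity to a scalar one by appealing to the holomorphic functional calculus, since both sides are rational (resp. power) functions of the single accretive matrix $A$ whose spectrum lies in the open right half-plane. Concretely, I would first verify the integral formula
\[
\lambda^{r}=\int_{0}^{1}\frac{\lambda^{2}}{s+(1-s)\lambda}\,d\mu(s),\qquad d\mu(s)=\frac{\sin(r-1)\pi}{\pi}\,\frac{s^{r-2}}{(1-s)^{r-1}}\,ds,
\]
for every $\lambda>0$ and $r\in(1,2)$, and then analytically continue both sides to $\lambda$ in the right half-plane $\Re\lambda>0$. Once the scalar identity holds on $S_{\pi/2}$, substituting $A$ via the Riesz--Dunford functional calculus and interchanging the contour integral with the (absolutely convergent) integral in $s$ yields the matrix statement, using that $A$ accretive implies $\sigma(A)\subset\{\Re z>0\}$, so $sI+(1-s)A$ is invertible for all $s\in[0,1]$ and the integrand $A^{2}(sI+(1-s)A)^{-1}$ is a well-defined analytic $\mathcal{M}_{n}$-valued function of the parameter.

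For the scalar identity the natural route is the classical Stieltjes/beta-integral representation. Writing $\lambda^{2}/(s+(1-s)\lambda)=\lambda/(1-s)\cdot 1/\bigl(1+\frac{s}{1-s}\lambda^{-1}\bigr)$ and substituting $t=\frac{s}{1-s}$ (so $s=\frac{t}{1+t}$, $1-s=\frac{1}{1+t}$, $ds=\frac{dt}{(1+t)^{2}}$) transforms the integral into something of the form $\text{const}\cdot\lambda\int_{0}^{\infty}\frac{t^{r-2}}{\lambda+t\lambda\cdot(\dots)}\,dt$; after simplification this should collapse to the standard formula $\dfrac{\sin(r-1)\pi}{\pi}\int_{0}^{\infty}\dfrac{t^{r-2}}{1+t}\,dt=1$ combined with the well-known $\mu^{\beta}=\dfrac{\sin\beta\pi}{\pi}\int_{0}^{\infty}\dfrac{t^{\beta-1}}{\mu+t}\,dt$ for $0<\beta<1$, applied with $\beta=r-1$ and $\mu=\lambda^{-1}$ (or $\mu=\lambda$, depending on how the algebra falls out). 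In fact it may be cleaner to connect directly with \eqref{raissouli_def}: Proposition~\ref{prop_simp} rewrites the integrand, and the definition of $\sharp_{r-1}$ as a Stieltjes integral already encodes exactly this representation, so one can try to recognize $\int_{0}^{1}A^{2}(sI+(1-s)A)^{-1}d\mu(s)$ as $I\sharp_{r-1}A$ up to the substitution $t=s/(1-s)$ and the identity $A^{r}=A\cdot A^{r-1}=A\,(I\sharp_{r-1}A)$, valid by the functional calculus since $r-1\in(0,1)$.

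The main obstacle is justifying the convergence of the integral at the endpoints $s=0$ and $s=1$ and the legitimacy of interchanging it with the functional-calculus contour integral. Near $s=0$ the density behaves like $s^{r-2}$ with $r-2\in(-1,0)$, which is integrable; near $s=1$ the density behaves like $(1-s)^{-(r-1)}$ with $-(r-1)\in(-1,0)$, again integrable, but one must also check that $A^{2}(sI+(1-s)A)^{-1}$ stays bounded as $s\to1$ — it does, tending to $A$ — and as $s\to0$ it tends to $A^{2}\cdot(\text{nothing})$... more precisely $A^{2}(sI+(1-s)A)^{-1}\to A$ as $s\to0$ as well when $A$ is invertible, so the operator factor is uniformly bounded on $[0,1]$ and the scalar integrability of $d\mu$ suffices. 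Hence I expect the bulk of the work to be the scalar verification plus a clean Fubini/analytic-continuation argument; once the identity is established for positive $A$ it extends to accretive $A$ because both sides are analytic in $A$ on the open set of matrices with spectrum in the right half-plane and agree on the (totally real, determining) subset of positive matrices.
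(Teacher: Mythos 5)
Your proposal is correct, and the ``cleaner'' alternative you sketch at the end of your second paragraph is in fact exactly the paper's proof: the authors quote from \cite{bedr} the representation $A^{\lambda}=\int_0^1(I!_sA)\,d\mu_\lambda(s)$ for $\lambda\in(0,1)$ (which is the definition \eqref{raissouli_def} of $I\sharp_\lambda A$ rewritten via the substitution $t=s/(1-s)$), write $A^{r}=A\cdot A^{r-1}$ with $r-1\in(0,1)$, and simplify $A(I!_sA)=A^{2}(sI+(1-s)A)^{-1}$, which is precisely the identity of Proposition \ref{prop_simp}. Your primary route --- verifying the scalar identity $\lambda^{r}=\int_0^1\lambda^{2}(s+(1-s)\lambda)^{-1}\,d\mu(s)$ from the classical formula $\mu^{\beta}=\tfrac{\sin\beta\pi}{\pi}\int_0^\infty\tfrac{\mu\,t^{\beta-1}}{\mu+t}\,dt$ with $\beta=r-1$, continuing analytically to $\Re\lambda>0$, and then substituting $A$ by the Riesz--Dunford calculus --- is also sound and has the merit of being self-contained, since $\sigma(A)$ lies in the open right half-plane and the $s$-integral converges absolutely and uniformly on any contour around $\sigma(A)$, so the Fubini interchange is harmless; the cost is that it re-derives what \cite{bedr} already supplies. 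Your endpoint analysis is right (the density is integrable at both ends and the operator factor is uniformly bounded on $[0,1]$), modulo the trivial slip that $A^{2}(sI+(1-s)A)^{-1}$ tends to $A^{2}$, not $A$, as $s\to1$ (it tends to $A$ as $s\to0$); this does not affect the argument.
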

\begin{proof}
We know that if $\lambda\in(0,1),$ then \cite{bedr}
$$A^{\lambda}=\int_{0}^{1}(I!_{s}A)d\mu_{\alpha}(s),\;{\text{where}}\;d\mu_{\lambda}(s)=\frac{\sin\lambda\pi}{\pi}\frac{s^{\lambda-1}}{(1-s)^{\lambda}}ds.$$ 
Now if $r>1,$ we can write 
\begin{align*}
A^{r}&=A^{[r]}A^{\lambda},\; 0<\lambda:=r-[r]< 1\\
&=A^{[r]}\int_{0}^{1}(I!_{s}A)d\mu_{\lambda}(s).
\end{align*}
So, when $r\in(1,2),$ we have $[r]=1$ and 
$$A^r=A\int_{0}^{1}(I!_sA)d\mu(s).$$ Simplifying this last identity, we obtain the desired result.
\end{proof} 

In \cite{bedr}, we showed that if $r\in[0,1]$, then $\Re(A^r)\geq \left( \Re A\right)^r.$ It is worth noting how this inequality is reversed when $r\in(1,2).$
\begin{theorem} Let $A\in\mathcal{M}_n$ be accretive and let $r\in(1,2).$ Then
\begin{align}
\Re(A^r)\leq \left( \Re A\right)^r.
\end{align}
\end{theorem}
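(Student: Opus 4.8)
The plan is to feed the integral representation from Proposition \ref{A^r} through the identity in Proposition \ref{prop_simp} and the Raissouli-type inequality of Lemma \ref{A!B>RA!RB}. The first observation I would record is that an accretive matrix in $\mathcal{M}_n$ is automatically sectorial: $W(A)$ is a compact convex subset of the open right half-plane, hence $W(A)\subset S_\alpha$ for some $0\le\alpha<\frac\pi2$. This is what makes Lemma \ref{A!B>RA!RB} available. Moreover $\Re A>0$ is invertible, so Proposition \ref{prop_simp} applies both to $A$ and to $\Re A$.

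The heart of the argument is a pointwise operator estimate for the integrand. Fix $s\in(0,1)$. By Proposition \ref{prop_simp},
$$A^2(sI+(1-s)A)^{-1}=\frac{A}{1-s}-\frac{s}{1-s}\,(I!_sA).$$
Applying the (linear) map $\Re(\cdot)$, using $\Re(I!_sA)\ge I!_s(\Re A)$ from Lemma \ref{A!B>RA!RB} with the pair $(I,A)$, and noting that the scalar coefficient $-\tfrac{s}{1-s}$ is negative so the inequality reverses, one gets
$$\Re\big(A^2(sI+(1-s)A)^{-1}\big)\le\frac{\Re A}{1-s}-\frac{s}{1-s}\big(I!_s(\Re A)\big)=(\Re A)^2\big(sI+(1-s)\Re A\big)^{-1},$$
where the last equality is Proposition \ref{prop_simp} applied to the positive matrix $\Re A$.

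Finally I would integrate this inequality against the measure $d\mu(s)=\frac{\sin(r-1)\pi}{\pi}\frac{s^{r-2}}{(1-s)^{r-1}}\,ds$, which is positive because $r-1\in(0,1)$ forces $\sin(r-1)\pi>0$. Since $\Re(\cdot)$ commutes with the integral and Proposition \ref{A^r} holds for both $A$ and $\Re A$ (the latter being accretive, indeed positive), this yields
$$\Re(A^r)=\int_0^1\Re\big(A^2(sI+(1-s)A)^{-1}\big)\,d\mu(s)\le\int_0^1(\Re A)^2\big(sI+(1-s)\Re A\big)^{-1}\,d\mu(s)=(\Re A)^r,$$
which is the claim.

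The argument is mostly an assembly of results already on the table, so I do not expect a serious obstacle; the one place to be careful is bookkeeping with signs — making sure the negative coefficient in front of $I!_sA$ flips the inequality in the correct direction — together with the remark that no real part of an inverse is ever taken (the only inversions sit inside the harmonic mean, to which Lemma \ref{A!B>RA!RB} applies verbatim), so Lemma \ref{RA} is not needed here.
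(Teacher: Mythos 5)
Your proposal is correct and follows essentially the same route as the paper: Proposition \ref{A^r} for the integral representation, Proposition \ref{prop_simp} to rewrite the integrand in terms of $I!_sA$, and Lemma \ref{A!B>RA!RB} with the sign reversal coming from the negative coefficient $-\tfrac{s}{1-s}$. Your explicit remarks that an accretive matrix lies in some $\mathcal{S}_\alpha$ (so the harmonic-mean lemma applies) and that the measure $d\mu$ is positive are details the paper leaves implicit, but the argument is the same.
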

\begin{proof} Let $r\in(1,2)$. Then 

\begin{align*}
\Re(A^r)&=\Re\left( \int^1_0 A^2(sI+(1-s)A)^{-1}d\mu(s)\right) \\
&=\Re\left(\int^1_0\left[\dfrac{A}{1-s}-\dfrac{s}{1-s}(I!_sA)\right] d\mu(s)\right) \hspace{0.5cm}\text{(by Proposition\;\ref{prop_simp})}\\
&=\int^1_0\left[\dfrac{\Re A}{1-s}-\dfrac{s}{1-s}\Re(I!_sA)\right] d\mu(s) \\
&\leq\int^1_0\left[\dfrac{\Re A}{1-s}-\dfrac{s}{1-s}(I!_s\Re A)\right] d\mu(s) \hspace{0.5cm}\text{(by Lemma\;\ref{A!B>RA!RB})}\\
&=\int^1_0 (\Re A)^2(sI+(1-s)\Re A)^{-1}d\mu(s)\hspace{0.5cm}\text{(by Proposition\;\ref{prop_simp})}\\
&=\left( \Re A\right)^r,\hspace{5cm}\text{(by Proposition\;\ref{A^r})}\\
\end{align*}
completing the proof.
\end{proof}

Next, we write the integral representation of $A\sharp_r B$, when $r\in(1,2).$
\begin{proposition}\label{prop_geo_r_1} Let $ A,B\in\mathcal{M}_n$ be accretive and let $r\in(1,2)$. Then
\begin{align*}
    A\sharp_rB&=\int^1_0\left[\dfrac{B}{1-s}-\dfrac{s}{1-s}(A!_sB)\right] d\mu(s)\\
    &=\int^1_0\left((1-s)B^{-1}+sB^{-1}AB^{-1}\right)^{-1}d\mu(s),
\end{align*}
for some prpbability measure $\mu(s)$ on $[0,1].$
\end{proposition}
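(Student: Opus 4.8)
The plan is to mimic exactly the argument used for $A^r$ in Proposition \ref{A^r}, but starting from the known integral representation of the weighted geometric mean $A\sharp_\lambda B$ for $\lambda\in(0,1)$ rather than of $A^\lambda$. The starting point is the analogue of the formula $A^\lambda=\int_0^1(I!_sA)\,d\mu_\lambda(s)$, namely that for accretive $A,B$ and $\lambda\in(0,1)$ one has
\begin{align*}
A\sharp_\lambda B=\int_0^1 (A!_sB)\,d\mu_\lambda(s),\qquad d\mu_\lambda(s)=\frac{\sin\lambda\pi}{\pi}\frac{s^{\lambda-1}}{(1-s)^\lambda}\,ds,
\end{align*}
which is essentially the Raissouli et al. definition \eqref{raissouli_def} rewritten via the substitution $t=s/(1-s)$ (so that $A^{-1}+tB^{-1}=(1-s)^{-1}(A^{-1}!_s'B^{-1})^{\text{-type}}$ expression, and $(A^{-1}+tB^{-1})^{-1}=(1-s)(A!_sB)$). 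I would first record this identity explicitly, checking the change of variables so the measure comes out right.

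Next, for $r\in(1,2)$ write $r=1+\lambda$ with $\lambda=r-1\in(0,1)$, and use $A\sharp_r B=A^{1/2}(A^{-1/2}BA^{-1/2})^{1+\lambda}A^{1/2}=B\,A^{-1}\,(A\sharp_\lambda B)$ — more precisely, I would use the homogeneity/transmutation identity $A\sharp_{1+\lambda}B = B(B^{-1}\sharp_\lambda A^{-1})^{-1}$... actually the cleanest route is: $A\sharp_r B = (A\sharp_{r-1}B)\,A^{-1}\,(A\sharp_{r-1}B)$ does not hold in general, so instead I would directly substitute $B=A^{1/2}(A^{-1/2}BA^{-1/2})A^{1/2}$ style manipulations, or better, apply the scalar identity that underlies Proposition \ref{prop_simp}: for the operator $X=A^{-1/2}BA^{-1/2}$ one has $X^{1+\lambda}=\int_0^1 X^2(sI+(1-s)X)^{-1}d\mu(s)$ with the same $d\mu$ as in Proposition \ref{A^r} (since this is just the scalar/functional-calculus identity applied to $X\geq$ sectorial). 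Conjugating by $A^{1/2}$ and simplifying $A^{1/2}X^2(sI+(1-s)X)^{-1}A^{1/2}$ gives $(sA^{-1}+(1-s)(A^{-1}BA^{-1})^{... })$-type terms; carrying out this conjugation carefully yields
\begin{align*}
A\sharp_r B=\int_0^1 \bigl((1-s)B^{-1}+sB^{-1}AB^{-1}\bigr)^{-1}d\mu(s),
\end{align*}
which is the second displayed formula. The first displayed formula then follows by applying the scalar identity of Proposition \ref{prop_simp} in the form $Y^2(sI+(1-s)Y)^{-1}=\frac{Y}{1-s}-\frac{s}{1-s}(I!_sY)$ with $Y$ replaced by the relevant conjugated operator, then conjugating back, exactly as in the proof of the theorem preceding this proposition; this converts $\bigl((1-s)B^{-1}+sB^{-1}AB^{-1}\bigr)^{-1}$ into $\frac{B}{1-s}-\frac{s}{1-s}(A!_sB)$. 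Finally I would note that $\mu$ is a probability measure because $\int_0^1 d\mu(s)=\frac{\sin(r-1)\pi}{\pi}B(r-1,2-r)=1$ by the beta-integral evaluation, which also retroactively justifies the word "probability measure."

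The main obstacle I anticipate is the bookkeeping in the conjugation step: verifying that $A^{1/2}\bigl(sI+(1-s)A^{-1/2}BA^{-1/2}\bigr)^{-1}(A^{-1/2}BA^{-1/2})^2 A^{1/2}$ really simplifies to $\bigl((1-s)B^{-1}+sB^{-1}AB^{-1}\bigr)^{-1}$. The cleanest way to handle this is to invert: show that the product of $\bigl((1-s)B^{-1}+sB^{-1}AB^{-1}\bigr)$ with $A^{1/2}\bigl(sI+(1-s)A^{-1/2}BA^{-1/2}\bigr)^{-1}(A^{-1/2}BA^{-1/2})^2 A^{1/2}$ equals the identity, which reduces to the algebraic identity $(1-s)B^{-1}+sB^{-1}AB^{-1}=B^{-1}A\bigl(sI+(1-s)A^{-1/2}BA^{-1/2}\bigr)\cdot(\text{conjugation})$ — a finite computation with no analytic subtlety, valid first for $A,B>0$ by continuity/functional calculus and then for accretive $A,B$ by the standard analytic-continuation argument (both sides are holomorphic in the entries of $A,B$ on the accretive domain and agree on the positive cone), which is the device already invoked implicitly in \eqref{geo_mean_accr} and throughout \cite{bedr}.
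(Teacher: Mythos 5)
Your final route --- apply the integral representation of Proposition \ref{A^r} to $X=A^{-1/2}BA^{-1/2}$, conjugate by $A^{1/2}$, check the algebraic identity $A^{1/2}X^{2}\bigl(sI+(1-s)X\bigr)^{-1}A^{1/2}=\bigl((1-s)B^{-1}+sB^{-1}AB^{-1}\bigr)^{-1}$, and then obtain the first displayed formula from Proposition \ref{prop_simp} --- is exactly the paper's proof, and your bookkeeping (including the beta-integral normalization of $\mu$) checks out. The abandoned detours and the unused identity $A\sharp_\lambda B=\int_0^1(A!_sB)\,d\mu_\lambda(s)$ could simply be deleted, but the argument you settle on is correct and essentially identical to the paper's.
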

\begin{proof} Notice that 
\begin{align*}
 &\int^1_0\left((1-s)B^{-1}+sB^{-1}AB^{-1}\right)^{-1}d\mu(s)\\
 &=A^{\frac{1}{2}}\left( \int^1_0A^{-\frac{1}{2}}BA^{-\frac{1}{2}}\left((1-s)A^{-\frac{1}{2}}BA^{-\frac{1}{2}}+sI\right)^{-1} A^{-\frac{1}{2}}BA^{-\frac{1}{2}}\;d\mu(s)\right) A^{\frac{1}{2}}\\ 
 &=A^{\frac{1}{2}} \left(A^{\frac{-1}{2}}BA^{\frac{-1}{2}}\right)^{r}A^{\frac{1}{2}}\hspace{2cm}\text{(by Proposition\;\ref{A^r})}\\
 &=A\sharp_rB.
\end{align*}

This completes the proof.
\end{proof}
Now we are ready to present the reversed version of \eqref{raisso_inq} when $r\in(1,2)$.
\begin{theorem}\label{Rsharp_<sharR} Let $ A,B\in\mathcal{M}_n$ be accretive and let $r\in(1,2)$. Then
\begin{align*}
    \Re(A\sharp_rB)\leq \Re A\sharp_r \Re B.
\end{align*}
\end{theorem}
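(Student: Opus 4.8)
The plan is to mimic the proof of the previous theorem (the reversal $\Re(A^r)\le(\Re A)^r$ for $r\in(1,2)$), but now at the level of the geometric mean rather than a single power. The starting point is the first integral representation in Proposition~\ref{prop_geo_r_1}, namely
\[
A\sharp_rB=\int_0^1\left[\frac{B}{1-s}-\frac{s}{1-s}(A!_sB)\right]d\mu(s),
\]
where $d\mu(s)=\frac{\sin(r-1)\pi}{\pi}\frac{s^{r-2}}{(1-s)^{r-1}}ds$ is a probability measure on $[0,1]$. First I would take the real part inside the integral, using that $\Re$ is linear and real-positive-weight-preserving, to get
\[
\Re(A\sharp_rB)=\int_0^1\left[\frac{\Re B}{1-s}-\frac{s}{1-s}\,\Re(A!_sB)\right]d\mu(s).
\]

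The crucial step is then to invoke Lemma~\ref{A!B>RA!RB}, which gives $\Re(A!_sB)\ge(\Re A)!_s(\Re B)$ for $s\in(0,1)$. Since this term enters with the \emph{negative} coefficient $-\tfrac{s}{1-s}$, the inequality flips direction, yielding
\[
\Re(A\sharp_rB)\le\int_0^1\left[\frac{\Re B}{1-s}-\frac{s}{1-s}\big((\Re A)!_s(\Re B)\big)\right]d\mu(s).
\]
Finally I would recognize the right-hand side, via Proposition~\ref{prop_geo_r_1} applied to the accretive (indeed positive) matrices $\Re A$ and $\Re B$, as exactly $\Re A\sharp_r\Re B$; note that for positive matrices the geometric mean is itself positive, so its real part is itself, and the identification is clean. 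This chains together to the claimed inequality.

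The main obstacle I anticipate is a technical sign/monotonicity bookkeeping issue: I must be certain that the map $X\mapsto\tfrac{B}{1-s}-\tfrac{s}{1-s}X$ is order-reversing on Hermitian matrices (it is, since $-\tfrac{s}{1-s}<0$ for $s\in(0,1)$), and that the harmonic mean terms $A!_sB$ are genuinely well defined for accretive $A,B$ — this last point is implicit in the machinery of \cite{bedr} and in Proposition~\ref{prop_geo_r_1}, but one should make sure the integral converges and the integrand is a well-defined Hermitian-valued function after taking real parts. A secondary subtlety is that Lemma~\ref{A!B>RA!RB} is stated for $A,B\in\mathcal S_\alpha$ rather than merely accretive; since every accretive matrix is sectorial for some $\alpha<\frac\pi2$ (as $W(A)$ is compact and contained in the open right half-plane), this is not a real restriction, but it may be worth a one-line remark. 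Once these points are dispatched, the proof is a short display chain essentially parallel to that of the preceding theorem.
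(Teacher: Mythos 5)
Your proposal is correct and follows essentially the same route as the paper's own proof: both start from the first integral representation in Proposition~\ref{prop_geo_r_1}, push $\Re$ inside the integral, apply Lemma~\ref{A!B>RA!RB} to the harmonic-mean term (whose negative coefficient reverses the inequality), and identify the resulting integral as $\Re A\sharp_r\Re B$. Your additional remarks on well-definedness and on accretive matrices being sectorial are sound but not needed beyond what the paper already assumes.
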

\begin{proof} By Proposition \ref{prop_geo_r_1}, we can write
\begin{align*}
\Re(A\sharp_rB)&=\Re\left( \int^1_0\left[\dfrac{B}{1-s}-\dfrac{s}{1-s}(A!_sB)\right] d\mu(s)\right)\\
&=\int^1_0\left[\dfrac{\Re B}{1-s}-\dfrac{s}{1-s}\Re(A!_sB)\right] d\mu(s)\\
&\leq \int^1_0\left[\dfrac{\Re B}{1-s}-\dfrac{s}{1-s}(\Re A!_s\Re B)\right] d\mu(s)\hspace{2cm}\text{(by Lemma\;\ref{A!B>RA!RB})}\\
&=\Re A\sharp_r \Re B,
\end{align*}
completing the proof.
\end{proof}
The following lemma is needed for our next discussion.
\begin{lemma} \label{r=2-r} Let $A,B\in\mathcal{M}_n$ be accretive. Then, for $r\in(1,2),$
\begin{align}
A\sharp_rB=B(A\sharp_{2-r}B)^{-1}B.
\end{align} 
\end{lemma}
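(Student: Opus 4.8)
The plan is to reduce everything to the scalar/positive-operator identity via the congruence by $A^{1/2}$ that defines $A\sharp_r B$. Writing $X = A^{-1/2}BA^{-1/2}$ (which makes sense and has spectrum in the open right half-plane since $B$ is accretive), we have by definition $A\sharp_s B = A^{1/2}X^s A^{1/2}$ for every real $s$. First I would compute the right-hand side: since $(A\sharp_{2-r}B)^{-1} = A^{-1}\sharp_{2-r}B^{-1}$ by the inversion identity noted in the introduction (just after \eqref{geo_mean_accr}), and $A^{-1}\sharp_{2-r}B^{-1} = A^{-1/2}\bigl(A^{1/2}B^{-1}A^{1/2}\bigr)^{2-r}A^{-1/2} = A^{-1/2}X^{-(2-r)}A^{-1/2}$, sandwiching between two copies of $B = A^{1/2}XA^{1/2}$ gives
\[
B(A\sharp_{2-r}B)^{-1}B = A^{1/2}X A^{1/2}\cdot A^{-1/2}X^{r-2}A^{-1/2}\cdot A^{1/2}X A^{1/2} = A^{1/2}X\,X^{r-2}\,X A^{1/2} = A^{1/2}X^{r}A^{1/2} = A\sharp_r B.
\]

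The only subtlety is the manipulation of fractional powers of the non-normal matrix $X$: the step $X\cdot X^{r-2}\cdot X = X^r$ uses the identity $X^a X^b = X^{a+b}$, which holds for the principal branch power of a matrix whose spectrum avoids the negative real axis (here $\sigma(X)\subset S_\alpha$ for some $\alpha$, since $B$ sectorial or merely accretive still gives $W(X)$ in the open right half-plane), together with the fact that $a=1$ and $b=r-2\in(-1,0)$ are both in the strip where the principal branch is well behaved and the semigroup law is valid. I would justify this by noting that $X$ is diagonalizable (or, if one prefers to avoid that, by the holomorphic functional calculus: $z\mapsto z^a z^b$ and $z\mapsto z^{a+b}$ agree as holomorphic functions on the right half-plane containing $\sigma(X)$). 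Also the identity $\bigl(A^{1/2}B^{-1}A^{1/2}\bigr)^{2-r} = X^{-(2-r)}$ uses $(Y^{-1})^{c} = Y^{-c}$ for the principal branch, again valid since $\sigma(Y)$ lies off the negative reals.

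The main obstacle, then, is purely the careful bookkeeping of principal-branch powers of a non-Hermitian matrix; there is no real inequality or analytic estimate involved, just verifying that every fractional-power rule I invoke is licensed by the spectrum of $X$ sitting in the open right half-plane. Once that is granted, the computation above is a one-line cancellation. An alternative, if one wants to sidestep the functional-calculus remarks entirely, is to verify the claimed identity first for $A,B>0$ positive definite (where the spectral theorem makes $X^aX^b=X^{a+b}$ immediate after unitary diagonalization of the positive matrix $A^{-1/2}BA^{-1/2}$), and then extend to accretive $A,B$ by the analytic continuation principle used elsewhere in \cite{bedr}: both sides are holomorphic in the entries of $A$ and $B$ on the (connected) domain of pairs of accretive matrices and agree on the positive-definite subset, hence agree everywhere. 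I would present the direct computation as the main proof and mention the continuation argument as a remark.
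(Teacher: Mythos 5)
Your proposal is correct and follows essentially the same route as the paper: write $X=A^{-1/2}BA^{-1/2}$, observe that $(A\sharp_{2-r}B)^{-1}=A^{-1/2}X^{r-2}A^{-1/2}$, sandwich by $B=A^{1/2}XA^{1/2}$, and cancel via $X\cdot X^{r-2}\cdot X=X^{r}$. The paper performs exactly this computation (writing $X^{r-2}$ as $X^{-1}X^{r}X^{-1}$ rather than invoking the inversion identity first), and your added remarks on justifying the principal-branch power law for the non-normal matrix $X$ address a point the paper silently takes for granted.
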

\begin{proof} Let $r\in(1,2)$. Then
\begin{align*}
B(A\sharp_{2-r}B)^{-1}B&=BA^{-\frac{1}{2}}\left(A^{-\frac{1}{2}}BA^{-\frac{1}{2}}\right)^{r-2}A^{-\frac{1}{2}}B
=BA^{-\frac{1}{2}}A^{\frac{1}{2}}B^{-1}A^{\frac{1}{2}}\left(A^{-\frac{1}{2}}BA^{-\frac{1}{2}}\right)^{r}A^{\frac{1}{2}}B^{-1}A^{\frac{1}{2}}A^{-\frac{1}{2}}B\\
&=A^{\frac{1}{2}}\left(A^{-\frac{1}{2}}BA^{-\frac{1}{2}}\right)^{r}A^{\frac{1}{2}}
=A\sharp_rB.
\end{align*}
This completes the proof.
\end{proof}

In the next lemma, we present the reversed version of \eqref{ando_ineq} when $r\in(1,2)$, for positive matrices $A,B.$
\begin{lemma} \label{phi_ sharp_r_posi} Let $\Phi$ be any positive unital linear map and let $A,B\in\mathcal{M}_n$ be positive. Then for $r\in(1,2),$
\begin{align}\label{phi_ sharp_r_posit}
\Phi(A\sharp_rB)\geq \Phi(A)\sharp_r\Phi(B).
\end{align}
\end{lemma}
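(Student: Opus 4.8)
\textbf{Proof proposal for Lemma \ref{phi_ sharp_r_posi}.}

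The plan is to reduce the case $r\in(1,2)$ to the already-established reversed inequality \eqref{Fujii} of Fujii for exponents in $(-1,0)$, using the identity of Lemma \ref{r=2-r}. First I would observe that for $A,B>0$ and $r\in(1,2)$ we have $2-r\in(0,1)$, so $A\sharp_{2-r}B$ is an ordinary geometric mean of positive matrices and is itself positive and invertible; hence $B(A\sharp_{2-r}B)^{-1}B$ makes sense, and Lemma \ref{r=2-r} gives $A\sharp_rB=B(A\sharp_{2-r}B)^{-1}B$. Applying $\Phi$ and then Lemma \ref{phi(ABA)} (the Choi–Davis–Jensen type inequality $\Phi(B)\Phi(C)^{-1}\Phi(B)\le\Phi(BC^{-1}B)$, valid since $C:=A\sharp_{2-r}B>0$) yields
$$\Phi(A\sharp_rB)=\Phi\bigl(B(A\sharp_{2-r}B)^{-1}B\bigr)\ge \Phi(B)\,\Phi(A\sharp_{2-r}B)^{-1}\,\Phi(B).$$

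Next I would bring in the Ando-type inequality \eqref{ando_ineq}, which for the exponent $2-r\in(0,1)$ and the unital positive map $\Phi$ gives $\Phi(A\sharp_{2-r}B)\le \Phi(A)\sharp_{2-r}\Phi(B)$. Since all matrices involved are positive definite, inversion reverses the order, so $\Phi(A\sharp_{2-r}B)^{-1}\ge\bigl(\Phi(A)\sharp_{2-r}\Phi(B)\bigr)^{-1}$, and the map $X\mapsto \Phi(B)X\Phi(B)$ is positive (congruence), so it preserves the inequality:
$$\Phi(B)\,\Phi(A\sharp_{2-r}B)^{-1}\,\Phi(B)\ \ge\ \Phi(B)\,\bigl(\Phi(A)\sharp_{2-r}\Phi(B)\bigr)^{-1}\,\Phi(B).$$
Finally, applying Lemma \ref{r=2-r} once more — this time to the positive matrices $\Phi(A),\Phi(B)$ in place of $A,B$ — gives $\Phi(B)\bigl(\Phi(A)\sharp_{2-r}\Phi(B)\bigr)^{-1}\Phi(B)=\Phi(A)\sharp_r\Phi(B)$. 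Chaining the three displayed relations produces $\Phi(A\sharp_rB)\ge\Phi(A)\sharp_r\Phi(B)$, as desired.

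The only genuinely delicate point is the very first step: Lemma \ref{phi(ABA)} requires the ``middle'' matrix to be positive definite, which holds here because $2-r\in(0,1)$ makes $A\sharp_{2-r}B$ a bona fide positive matrix; this is exactly why the argument works for $r\in(1,2)$ and would fail for larger $r$, where $A\sharp_{2-r}B$ need no longer be a matrix mean. An alternative route, which I would mention as a remark, avoids Lemma \ref{r=2-r} entirely: use the integral representation of Proposition \ref{prop_geo_r_1}, namely $A\sharp_rB=\int_0^1\bigl((1-s)B^{-1}+sB^{-1}AB^{-1}\bigr)^{-1}d\mu(s)$, apply $\Phi$ inside the integral, and then combine the operator convexity of $X\mapsto X^{-1}$ (via Lemma \ref{phi(ABA)} applied termwise) with the superlinearity properties of $\Phi$; integrating the resulting pointwise-in-$s$ inequality against the probability measure $\mu$ gives the claim. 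Either way, the substance is the reversal of an operator inequality under inversion together with a congruence, so I expect no real obstacles beyond keeping track of which inequalities reverse.
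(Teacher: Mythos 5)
Your proposal is correct and follows essentially the same route as the paper's own proof: rewrite $A\sharp_rB=B(A\sharp_{2-r}B)^{-1}B$ via Lemma \ref{r=2-r}, apply Lemma \ref{phi(ABA)}, invoke Ando's inequality \eqref{ando_ineq} for the exponent $2-r\in(0,1)$, and undo the congruence with Lemma \ref{r=2-r} applied to $\Phi(A),\Phi(B)$. Your write-up merely spells out the order-reversal under inversion and the congruence step, which the paper leaves implicit.
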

\begin{proof} By Lemma \ref{r=2-r}, Lemma \ref{phi(ABA)} and then \eqref{ando_ineq}, we have
\begin{align*}
\Phi(A\sharp_rB)=\Phi(B(A\sharp_{2-r}B)^{-1}B)
\geq\Phi(B)\Phi^{-1}(A\sharp_{2-r}B)\Phi(B)
\geq\Phi(B)(\Phi(A)\sharp_{2-r}\Phi(B))^{-1}\Phi(B).
\end{align*}
This, with Lemma \ref{r=2-r} implies
$$
\Phi(A\sharp_rB)\geq\Phi(A)\sharp_r\Phi(B),
$$
 completing the proof.
\end{proof}

It is customary when studying this type of inequalities to look at the reversed versions. Our next results will treat these reverses. However, we will have a tighter condition that one of the two matrices is positive. This is due to the fact that when $A,B$ are accretive and $r\not\in [0,1]$, it is not guaranteed that $A\sharp_rB$ is accretive too. 

\begin{proposition} \label{sharp_r_S} Let $A\in\mathcal{S}_\alpha$ and $B>0$. Then, for $r\in(1,2)$, $A\sharp_rB\in \mathcal{S}_\alpha$.
\end{proposition}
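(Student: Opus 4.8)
The plan is to read the conclusion off the integral representation from Proposition~\ref{prop_geo_r_1},
\[
A\sharp_r B=\int_0^1\bigl((1-s)B^{-1}+sB^{-1}AB^{-1}\bigr)^{-1}d\mu(s),
\]
where $\mu$ is a probability measure on $[0,1]$, by checking that the whole integrand lies in $\mathcal{S}_\alpha$ and that $\mathcal{S}_\alpha$ is stable under this kind of averaging. First I would record three elementary facts about the sector $S_\alpha=\{z:\Re z>0,\ |\Im z|\le\tan\alpha\,\Re z\}$: it is invariant under multiplication by positive reals (it is a cone), invariant under complex conjugation, and $S_\alpha+(0,\infty)\subseteq S_\alpha$. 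Translated to matrices, this says: $\mathcal{S}_\alpha$ is closed under $*$-congruence $C\mapsto T^*CT$ by an invertible $T$, closed under inversion, and $P+C\in\mathcal{S}_\alpha$ whenever $P>0$ and $C\in\mathcal{S}_\alpha$.

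With these in place the argument is short. Since $B>0$ we have $B^{-1}=(B^{-1})^*>0$; for $x\ne0$ put $y=B^{-1}x\ne0$, so $\langle B^{-1}AB^{-1}x,x\rangle=\langle Ay,y\rangle$, which lies in $\|y\|^2 S_\alpha=S_\alpha$ because $A\in\mathcal{S}_\alpha$ and $S_\alpha$ is a cone; hence $B^{-1}AB^{-1}\in\mathcal{S}_\alpha$. Adding the positive semidefinite matrix $(1-s)B^{-1}$ keeps us in the sector, so $C_s:=(1-s)B^{-1}+sB^{-1}AB^{-1}\in\mathcal{S}_\alpha$ for every $s\in[0,1]$; in particular $\Re C_s>0$, so $C_s$ is invertible and $C_s^{-1}\in\mathcal{S}_\alpha$ (explicitly, $\langle C_s^{-1}u,u\rangle=\overline{\langle C_sv,v\rangle}$ with $v=C_s^{-1}u$, and this lies in $\overline{S_\alpha}=S_\alpha$). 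Since $C_0=B^{-1}$ and $C_1=B^{-1}AB^{-1}$ are invertible, $s\mapsto\|C_s^{-1}\|$ is bounded on $[0,1]$, so the integral converges and no trouble arises at the (integrable) endpoint singularities of $\mu$. Finally, for a unit vector $x$,
\[
\langle(A\sharp_rB)x,x\rangle=\int_0^1\langle C_s^{-1}x,x\rangle\,d\mu(s),
\]
whose real part is $\int_0^1\Re\langle C_s^{-1}x,x\rangle\,d\mu(s)>0$, while $|\Im\langle(A\sharp_rB)x,x\rangle|\le\int_0^1|\Im\langle C_s^{-1}x,x\rangle|\,d\mu(s)\le\tan\alpha\int_0^1\Re\langle C_s^{-1}x,x\rangle\,d\mu(s)=\tan\alpha\,\Re\langle(A\sharp_rB)x,x\rangle$. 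Hence $W(A\sharp_rB)\subseteq S_\alpha$, i.e. $A\sharp_rB\in\mathcal{S}_\alpha$.

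I do not expect a genuine obstacle: the only things requiring care are the routine cone/conjugation/inversion bookkeeping for $S_\alpha$ and the observation that $C_s$ stays invertible throughout $[0,1]$, which is automatic once $C_s\in\mathcal{S}_\alpha$. An alternative, shorter route is to use Lemma~\ref{r=2-r}, namely $A\sharp_rB=B(A\sharp_{2-r}B)^{-1}B$ with $2-r\in(0,1)$, together with the inversion- and congruence-closure of $\mathcal{S}_\alpha$ and the known fact that $A\sharp_sB\in\mathcal{S}_\alpha$ for $s\in[0,1]$ when $A\in\mathcal{S}_\alpha$ and $B>0$ (so $B\in\mathcal{S}_0\subseteq\mathcal{S}_\alpha$); I would mention this, but keep the self-contained argument above since that $s\in[0,1]$ statement is not proved in the present paper.
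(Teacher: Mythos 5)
Your argument is correct and is essentially the paper's own proof: both establish $B^{-1}AB^{-1}\in\mathcal{S}_\alpha$ by congruence, use the integral representation from Proposition~\ref{prop_geo_r_1} together with the closure of the sector under addition of positive matrices and under inversion, and then verify $|\Im\langle(A\sharp_rB)x,x\rangle|\le\tan\alpha\,\Re\langle(A\sharp_rB)x,x\rangle$ by pushing the sector condition through the integral. The extra bookkeeping you supply (conjugation-invariance for the inversion step, boundedness of $\|C_s^{-1}\|$) and the alternative route via Lemma~\ref{r=2-r} are fine but not needed beyond what the paper already does.
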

\begin{proof}  First we show that $B^{-1}AB^{-1}\in \mathcal{S}_\alpha$. If $A\in\mathcal{S}_\alpha$ and $B>0$, we have for any vector $x\in\mathbb{C}^n$, $\left<B^{-1}AB^{-1}\;x,x\right>=\left<A(B^{-1}x),B^{-1}x\right>\in S_{\alpha}$. This shows that $B^{-1}AB^{-1}\in\mathcal{S}_\alpha$. 
 
 Now use Proposition \ref{prop_geo_r_1} and notice that for $r\in(1,2),$
 $$A\sharp_rB=\int^1_0\left((1-s)B^{-1}+sB^{-1}AB^{-1}\right)^{-1}d\mu(s),$$
 for some probability measure $\mu(s)$ on $[0,1].$ Then for any vector $x\in\mathbb{C}^n$, we have
\begin{align*}
\left<A\sharp_rBx,x\right>&=\int_{0}^{1}\left<\left((1-s)B^{-1}+sB^{-1}AB^{-1}\right)^{-1}x,x\right>d\mu(s)\\
&=\int_{0}^{1}g(s)d\mu(s)\;\left({\text{where}}\;g(s)=\left<\left((1-s)B^{-1}+sB^{-1}AB^{-1}\right)^{-1}x,x\right>\right)\\
&=a+ib,
\end{align*}
where
$$a=\Re \int_{0}^{1}g(s)d\mu(s), b=\Im \int_{0}^{1}g(s)d\mu(s).$$ We notice that for each $s\in [0,1]$, $g(s)\in S_{\alpha}$ since $A,B^{-1}AB^{-1}\in S_{\alpha}.$ This is due to the fact that $\mathcal{S}_{\alpha}$ is invariant under inversion and addition. To show that $A\sharp_rB\in \mathcal{S}_{\alpha},$ we need to show that $\left<(A\sharp_rB)x,x\right>\in S_{\alpha},$ or $|b|\leq \tan (\alpha)a.$ In fact, we have
\begin{align*}
|b|&=\left|\Im \int_{0}^{1}g(s)\;d\mu(s)\right|\\
&\leq \int_{0}^{1}\left|\Im g(s)\right|\;d\mu(s)\\
&\leq \int_{0}^{1}\tan(\alpha)\Re g(s)\;d\mu(s)\;\;(\text{since}\;g(s)\in S_{\alpha})\\
&=\tan(\alpha)a.
\end{align*}
This shows that $A\sharp_rB\in \mathcal{S}_{\alpha}$ and completes the proof.
\end{proof}

Now we are ready to present the reversed version of \eqref{tan_inq} when $r\in(1,2)$.
\begin{theorem}\label{inve_Rsharpr} Let $A\in\mathcal{S}_\alpha$ and $B>0$. Then for any $r\in(1,2),$
\begin{align}
\cos\alpha\;(\Re A\sharp_r\Re B)\leq\Re (A\sharp_r B).
\end{align}
\end{theorem}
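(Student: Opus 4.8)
The plan is to rerun the argument behind Theorem \ref{Rsharp_<sharR}, but this time keeping the constant $\cos\alpha$ and invoking Lemma \ref{RA} in place of Lemma \ref{A!B>RA!RB}. The essential input is the second form of the integral representation in Proposition \ref{prop_geo_r_1}, combined with the fact that $B>0$ forces $\Re B=B$; this last point is precisely where the stronger hypothesis on $B$ is used.

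First I would write, by the second identity of Proposition \ref{prop_geo_r_1},
\[
A\sharp_r B=\int_0^1 D_s^{-1}\,d\mu(s),\qquad D_s:=(1-s)B^{-1}+sB^{-1}AB^{-1},
\]
where $\mu$ is the probability measure on $[0,1]$ with $d\mu(s)=\frac{\sin(r-1)\pi}{\pi}\frac{s^{r-2}}{(1-s)^{r-1}}\,ds$, which depends on $r$ only. As observed in the proof of Proposition \ref{sharp_r_S}, $B^{-1}AB^{-1}\in\mathcal{S}_\alpha$ because $A\in\mathcal{S}_\alpha$ and $B>0$; since $(1-s)B^{-1}$ is positive semidefinite and $\mathcal{S}_\alpha$ is closed under addition, $D_s\in\mathcal{S}_\alpha$ for each $s$. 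In particular every $D_s$ is invertible with $\Re D_s>0$.

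Then I would take real parts through the integral, obtaining $\Re(A\sharp_r B)=\int_0^1\Re(D_s^{-1})\,d\mu(s)$, and apply Lemma \ref{RA} to each $D_s$: the inequality $(\Re D_s)^{-1}\le\sec\alpha\,\Re(D_s^{-1})$ rearranges to $\Re(D_s^{-1})\ge\cos\alpha\,(\Re D_s)^{-1}$. Since $\mu\ge 0$, integrating yields
\[
\Re(A\sharp_r B)\ \ge\ \cos\alpha\int_0^1(\Re D_s)^{-1}\,d\mu(s).
\]
Because $B$ is Hermitian, $\Re(B^{-1}AB^{-1})=B^{-1}(\Re A)B^{-1}$, so $\Re D_s=(1-s)B^{-1}+sB^{-1}(\Re A)B^{-1}$; and since $\Re B=B$, this equals $(1-s)(\Re B)^{-1}+s(\Re B)^{-1}(\Re A)(\Re B)^{-1}$, which is exactly the integrand produced by applying Proposition \ref{prop_geo_r_1} to the positive matrices $\Re A$ and $\Re B$. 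Hence $\int_0^1(\Re D_s)^{-1}\,d\mu(s)=\Re A\sharp_r\Re B$, and the displayed bound becomes $\Re(A\sharp_r B)\ge\cos\alpha\,(\Re A\sharp_r\Re B)$, as required.

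The only step needing attention is the membership $D_s\in\mathcal{S}_\alpha$, which guarantees that Lemma \ref{RA} applies — but this is already contained in the proof of Proposition \ref{sharp_r_S}. Everything else is a direct manipulation of the integral representation; the one conceptual point is to notice that $\mu$ is determined by $r$ alone, so the same $\mu$ represents both $A\sharp_r B$ and $\Re A\sharp_r\Re B$, and I do not expect any genuine obstacle beyond this bookkeeping.
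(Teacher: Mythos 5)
Your proof is correct, but it follows a genuinely different route from the paper's. The paper proves this theorem by reducing the weight: it invokes Lemma \ref{r=2-r} to write $A\sharp_rB=B(A^{-1}\sharp_{2-r}B^{-1})B$, applies the known inequality \eqref{raisso_inq} to the mean with weight $2-r\in(0,1)$, then uses Lemma \ref{RA} on $A$ (with $\Re(B^{-1})=(\Re B)^{-1}$ exactly, since $B>0$) and reassembles via the same identity for the positive matrices $\Re A,\Re B$. You instead stay entirely within the $r\in(1,2)$ framework: you take the integral representation of Proposition \ref{prop_geo_r_1}, check that each integrand $D_s=(1-s)B^{-1}+sB^{-1}AB^{-1}$ lies in $\mathcal{S}_\alpha$ (exactly the observation already made in the proof of Proposition \ref{sharp_r_S}), apply Lemma \ref{RA} pointwise to get $\Re(D_s^{-1})\ge\cos\alpha\,(\Re D_s)^{-1}$, and identify $\int_0^1(\Re D_s)^{-1}d\mu(s)$ with $\Re A\sharp_r\Re B$ using that $\mu$ depends on $r$ alone and that $\Re D_s=(1-s)(\Re B)^{-1}+s(\Re B)^{-1}(\Re A)(\Re B)^{-1}$ when $B=\Re B$. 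Your version has the advantage of being self-contained (it does not import the $(0,1)$-weight result \eqref{raisso_inq}) and of making the single factor $\cos\alpha$ completely transparent, coming from one application of Lemma \ref{RA} per integrand; the paper's version is shorter if one takes \eqref{raisso_inq} and the mean identity as given, though as printed it is somewhat looser about how the constant emerges from combining Lemma \ref{RA} with the homogeneity of $\sharp_{2-r}$. Both arguments use the hypothesis $B>0$ in an essential and analogous way.
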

\begin{proof} For $r\in(1,2)$, we have for any vector $x\in\mathbb{C}^n$,
\begin{align*}
\left<\Re(A\sharp_rB)x,x\right>&=\Re\left<B(A^{-1}\sharp_{2-r}B^{-1})Bx,x\right>\hspace{1cm}\text{(by Lemma\; \ref{r=2-r})}\\
&=\left<\Re(A^{-1}\sharp_{2-r}B^{-1})Bx,Bx\right>\\
&\geq \left<(\Re(A^{-1})\sharp_{2-r}\Re(B^{-1}))Bx,Bx\right>\hspace{1cm}\text{(by\;\eqref{raisso_inq})}\\
&\geq \left<(\cos^2\alpha\;(\Re A)^{-1}\sharp_{2-r}(\Re B)^{-1})Bx,Bx\right>\hspace{1cm}\text{(by Lemma\; \ref{RA})}\\
&=\cos\alpha\;\left<(\Re B((\Re A)^{-1}\sharp_{2-r}(\Re B)^{-1})\Re B)x,x\right>\hspace{2cm}\text{(since $B=\Re B$)}\\
&=\cos\alpha\;\left<(\Re A\sharp_r\Re B)x,x\right>.
\end{align*}
This completes the proof.
\end{proof}

It is well known that if $A,B$ are positive matrices, then for $r\in(1,2)$, see \cite{Furuta1}
\begin{align}\label{pos_nabla_r_sharp}
(1-r)A+rB\leq A\sharp_r B.
\end{align} 
Our next result gives an accretive version of this inequality. 
 
\begin{theorem} Let $A\in\mathcal{S}_{\alpha}$ and $B>0$. Then, for $r\in(1,2),$
\begin{align}
\cos\alpha\;\Re((1-r)A+rB)\leq\Re (A\sharp_r B).
\end{align}
\end{theorem}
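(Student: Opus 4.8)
The plan is to reduce this statement to the already-proven Theorem \ref{inve_Rsharpr} together with the classical inequality \eqref{pos_nabla_r_sharp} applied to the \emph{positive} matrices $\Re A$ and $\Re B$. Since $A\in\mathcal{S}_\alpha$, its real part $\Re A$ is positive, and $\Re B = B > 0$ by hypothesis, so both real parts are genuine positive matrices to which \eqref{pos_nabla_r_sharp} applies directly. This gives
\begin{align*}
(1-r)\Re A + r\Re B \leq \Re A \,\sharp_r\, \Re B.
\end{align*}
Because $\Re((1-r)A+rB) = (1-r)\Re A + r\Re B$ (the coefficients $1-r$ and $r$ are real), multiplying both sides by the scalar $\cos\alpha \in (0,1]$ preserves the Löwner order, yielding
\begin{align*}
\cos\alpha\;\Re((1-r)A+rB) \leq \cos\alpha\;(\Re A \,\sharp_r\, \Re B).
\end{align*}

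Next I would chain this with Theorem \ref{inve_Rsharpr}, which states precisely that $\cos\alpha\;(\Re A \,\sharp_r\, \Re B) \leq \Re(A\,\sharp_r\,B)$ under exactly the same hypotheses ($A\in\mathcal{S}_\alpha$, $B>0$, $r\in(1,2)$). Transitivity of the partial order then gives
\begin{align*}
\cos\alpha\;\Re((1-r)A+rB) \leq \cos\alpha\;(\Re A \,\sharp_r\, \Re B) \leq \Re(A\,\sharp_r\,B),
\end{align*}
which is the claimed inequality. The proof is essentially a two-line combination once the pieces are lined up.

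The only point requiring any care — and I do not expect it to be a genuine obstacle — is verifying that $\Re A > 0$ (not merely $\geq 0$) so that $\Re A \,\sharp_r\, \Re B$ is well defined via \eqref{geo_mean_positive}; this is immediate from $A\in\mathcal{S}_\alpha$, since membership in $\mathcal{S}_\alpha$ forces $\Re\langle Ax,x\rangle > 0$ for all nonzero $x$. One should also note that \eqref{pos_nabla_r_sharp} is stated for positive $A,B$ and $r\in(1,2)$, which is exactly our regime. No accretivity of $A\sharp_r B$ itself is needed here, so Proposition \ref{sharp_r_S} is not invoked; the inequality is purely between Hermitian matrices (real parts), and the passage from the operator-inequality form to the quadratic-form form is not even necessary since everything can be kept at the level of Löwner order. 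Thus the whole argument is a clean concatenation of \eqref{pos_nabla_r_sharp}, scalar multiplication by $\cos\alpha$, and Theorem \ref{inve_Rsharpr}.
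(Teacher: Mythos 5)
Your proposal is correct and matches the paper's own proof: both apply the positive-matrix inequality \eqref{pos_nabla_r_sharp} to $\Re A$ and $\Re B$, multiply by $\cos\alpha$, and chain with Theorem \ref{inve_Rsharpr}. Your added remarks on why $\Re A>0$ and why \eqref{pos_nabla_r_sharp} applies are sensible but not a departure from the paper's argument.
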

\begin{proof} Using \eqref{pos_nabla_r_sharp},  then Theorem \ref{inve_Rsharpr}, we obtain
\begin{align*}
\cos\alpha\;\Re((1-r)A+rB)=\cos\alpha\;((1-r)\Re A+r\Re B)\leq\cos\alpha\;(\Re A\sharp_r \Re B)\leq\Re (A\sharp_r B),
\end{align*}
completing the proof.
\end{proof}
Now we are ready to present the sectorial version of Lemma \ref{phi_ sharp_r_posi} above.
\begin{theorem}\label{phi_sharp_r} Let $A\in\mathcal{S}_\alpha $, $B>0$ and let $\Phi$ be a positive unital linear map. Then for $r\in(1,2)$, 
\begin{align}
\cos\alpha\;\Re(\Phi(A)\sharp_{r} \Phi(B))\leq\Re\Phi(A\sharp_{r} B).
\end{align}
\end{theorem}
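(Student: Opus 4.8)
The plan is to obtain the estimate by chaining together three facts already proved in this section, using only that a positive unital linear map $\Phi$ is $*$-linear (hence $\Re\Phi(X)=\Phi(\Re X)$ for every $X\in\mathcal{M}_n$) and monotone. First I would record the preliminary observation that $\Phi(A)$ is accretive: since $A\in\mathcal{S}_\alpha$ we have $\Re A>0$, so $\Re A\ge cI$ for some $c>0$, whence $\Re\Phi(A)=\Phi(\Re A)\ge c\,\Phi(I)=cI>0$. Thus $\Phi(A)$ is accretive and $\Phi(B)>0$ is accretive as well, so that $\Phi(A)\sharp_r\Phi(B)$ is meaningfully defined for $r\in(1,2)$ and Theorem \ref{Rsharp_<sharR} applies to the pair $\Phi(A),\Phi(B)$.

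Concretely, applying Theorem \ref{Rsharp_<sharR} gives $\Re(\Phi(A)\sharp_r\Phi(B))\le \Re\Phi(A)\sharp_r\Re\Phi(B)=\Phi(\Re A)\sharp_r\Phi(\Re B)$, where in the last equality I move $\Re$ across $\Phi$ (and use $\Re B=B$, $\Re\Phi(B)=\Phi(B)$ since $\Phi(B)>0$). Next, because $\Re A$ and $\Re B$ are positive matrices, Lemma \ref{phi_ sharp_r_posi} yields $\Phi(\Re A)\sharp_r\Phi(\Re B)\le \Phi(\Re A\sharp_r\Re B)$. Finally, Theorem \ref{inve_Rsharpr} gives $\cos\alpha\,(\Re A\sharp_r\Re B)\le \Re(A\sharp_r B)$, and applying the monotone linear map $\Phi$ together with $\Phi(\Re(A\sharp_r B))=\Re\Phi(A\sharp_r B)$ produces $\cos\alpha\,\Phi(\Re A\sharp_r\Re B)\le \Re\Phi(A\sharp_r B)$. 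Multiplying the first inequality by $\cos\alpha>0$ and concatenating the three estimates gives $\cos\alpha\,\Re(\Phi(A)\sharp_r\Phi(B))\le \cos\alpha\,\Phi(\Re A)\sharp_r\Phi(\Re B)\le \cos\alpha\,\Phi(\Re A\sharp_r\Re B)\le \Re\Phi(A\sharp_r B)$, which is the claimed inequality.

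I do not expect a genuine obstacle here: the argument is essentially a matter of selecting the correct order in which to invoke Theorem \ref{Rsharp_<sharR}, Lemma \ref{phi_ sharp_r_posi} and Theorem \ref{inve_Rsharpr}. The only points requiring a moment's care are (i) verifying that $\Phi(A)$ is accretive, so that Theorem \ref{Rsharp_<sharR} is legitimately applicable, and (ii) justifying that the two occurrences of $\Re$ commute with $\Phi$; both are immediate consequences of $\Phi$ being positive, unital and $*$-linear.
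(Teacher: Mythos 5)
Your proposal is correct and follows essentially the same route as the paper: the paper's proof is precisely the chain $\Re\Phi(A\sharp_r B)\ge\cos\alpha\,\Phi(\Re A\sharp_r\Re B)\ge\cos\alpha\,\Phi(\Re A)\sharp_r\Phi(\Re B)\ge\cos\alpha\,\Re(\Phi(A)\sharp_r\Phi(B))$, i.e.\ Theorem \ref{inve_Rsharpr}, Lemma \ref{phi_ sharp_r_posi} and Theorem \ref{Rsharp_<sharR} applied in the reverse order of your write-up. Your added checks that $\Phi(A)$ is accretive and that $\Re$ commutes with $\Phi$ are correct and are left implicit in the paper.
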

\begin{proof} By  Theorem \ref{inve_Rsharpr}, Lemma \ref{phi_ sharp_r_posi} and   Theorem \ref{Rsharp_<sharR}, we have 
\begin{align*}
\Re\Phi(A\sharp_{r} B)\geq\cos\alpha\;\Phi(\Re A\sharp_{r} \Re B)\geq \cos\alpha\;\Phi(\Re A)\sharp_{r} \Phi(\Re B)
\geq \cos\alpha\;\Re(\Phi(A)\sharp_{r} \Phi(B)),
\end{align*}
completing the proof.
\end{proof}
When $A>0$, then $\alpha=0,$ and we obtain  the Inequality \eqref{phi_ sharp_r_posit}.

\begin{corollary} Let $A\in\mathcal{S}_\alpha $, $B>0$ and let $\Phi$ be a positive unital linear map. Then, for any unitarily invariant norm $ |||\cdot|||$ and any $r\in(1,2),$
\begin{align}
\cos^2\alpha\;|||\Phi(A)\sharp_{r} \Phi(B)|||\leq |||\Phi(A\sharp_{r} B)|||.
\end{align}

\end{corollary}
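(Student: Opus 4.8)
The plan is to combine the operator inequality just established in Theorem~\ref{phi_sharp_r} with the norm comparison for sectorial matrices from Lemma~\ref{norm}. First I would record that Theorem~\ref{phi_sharp_r} gives the positive-semidefinite inequality $\cos\alpha\,\Re(\Phi(A)\sharp_r\Phi(B))\leq\Re\Phi(A\sharp_r B)$, and since both sides are positive semidefinite (being real parts of matrices whose numerical ranges lie in $S_\alpha$, hence accretive, after invoking Proposition~\ref{sharp_r_S} to see that $A\sharp_r B\in\mathcal{S}_\alpha$), monotonicity of every unitarily invariant norm under the L\"owner order yields
\begin{align*}
\cos\alpha\,|||\Re(\Phi(A)\sharp_r\Phi(B))|||\leq|||\Re\Phi(A\sharp_r B)|||.
\end{align*}

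Next I would bound the two sides separately using Lemma~\ref{norm}. For the left side, since $\Phi(A)\sharp_r\Phi(B)$ need not be sectorial in general, I should instead apply Lemma~\ref{norm} to $\Phi(A\sharp_r B)$ on the right: because $A\sharp_r B\in\mathcal{S}_\alpha$ by Proposition~\ref{sharp_r_S} and positive unital linear maps preserve the sectorial class (the numerical range of $\Phi(X)$ is contained in the closed convex hull of that of $X$), we have $\Phi(A\sharp_r B)\in\mathcal{S}_\alpha$, so Lemma~\ref{norm} gives $|||\Re\Phi(A\sharp_r B)|||\leq|||\Phi(A\sharp_r B)|||$. For the left side I would use the trivial direction: for any accretive (indeed any) matrix $X$ one has $|||\Re X|||\ge$ nothing useful directly, so instead I use that $\Phi(A)$ and $\Phi(B)$ are themselves in $\mathcal{S}_\alpha$ (again since $\Phi$ preserves sectoriality and $A,B\in\mathcal{S}_\alpha$, noting $B>0\subset\mathcal{S}_\alpha$), whence $\Phi(A)\sharp_r\Phi(B)\in\mathcal{S}_\alpha$ by Proposition~\ref{sharp_r_S} applied with $\Phi(B)>0$; then Lemma~\ref{norm} gives $\cos\alpha\,|||\Phi(A)\sharp_r\Phi(B)|||\leq|||\Re(\Phi(A)\sharp_r\Phi(B))|||$.

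Chaining these three inequalities produces
\begin{align*}
\cos^2\alpha\,|||\Phi(A)\sharp_r\Phi(B)|||\leq\cos\alpha\,|||\Re(\Phi(A)\sharp_r\Phi(B))|||\leq|||\Re\Phi(A\sharp_r B)|||\leq|||\Phi(A\sharp_r B)|||,
\end{align*}
which is exactly the claimed bound.

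\textbf{Main obstacle.} The one point requiring care is the justification that $\Phi$ maps $\mathcal{S}_\alpha$ into $\mathcal{S}_\alpha$ and that $\Phi(B)>0$ when $B>0$ and $\Phi$ is unital; the latter is standard, and the former follows since $W(\Phi(X))\subseteq\overline{\mathrm{co}}\,W(X)\subseteq S_\alpha$ for unital positive $\Phi$, but I should state this explicitly rather than treat it as obvious. Everything else is just assembling Theorem~\ref{phi_sharp_r}, Proposition~\ref{sharp_r_S}, Lemma~\ref{norm}, and monotonicity of unitarily invariant norms, so no genuinely hard estimate is involved.
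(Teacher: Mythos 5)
Your proof is correct and follows essentially the same route as the paper: the identical three-inequality chain obtained from Theorem~\ref{phi_sharp_r}, monotonicity of unitarily invariant norms, and Lemma~\ref{norm}. The only difference is that you spell out the justifications (sectoriality of $\Phi(A)\sharp_r\Phi(B)$ via Proposition~\ref{sharp_r_S} and the fact that unital positive maps preserve $\mathcal{S}_\alpha$) that the paper leaves implicit.
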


\begin{proof} By Theorem \ref{phi_sharp_r} and   Lemma \ref{norm}, we have
\begin{align*}
\cos^2\alpha\;|||\Phi(A)\sharp_{r} \Phi(B)|||\leq\cos\alpha\;|||\Re(\Phi(A)\sharp_{r} \Phi(B))|||\leq |||\Re(\Phi(A\sharp_{r} B))|||\leq |||\Phi(A\sharp_{r} B)|||.
\end{align*}
\end{proof}
Related to our discussion, we have the following numerical radius inequality.
\begin{theorem} Let $A\in\mathcal{S}_\alpha $ and $B>0$. Then for $r\in(1,2),$ 
\begin{align}
\cos^6\alpha\; w^{-1}(B^{-2})w^{1-r}(A)w^{r-2}(B)\leq w(A\sharp_{r} B).
\end{align}
\end{theorem}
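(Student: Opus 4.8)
The plan is to reduce the numerical radius inequality to a chain of three applications of the tools already assembled in the Preliminaries, exploiting the identity from Lemma \ref{r=2-r} to convert the problematic exponent $r\in(1,2)$ into an exponent $2-r\in(0,1)$ on which Lemma \ref{nume_sharp_inq} applies. First I would write, using Lemma \ref{r=2-r},
\[
A\sharp_r B = B(A\sharp_{2-r}B)^{-1}B = B\,(A^{-1}\sharp_{2-r}B^{-1})\,B,
\]
where the second equality uses $(A\sharp_{2-r}B)^{-1}=A^{-1}\sharp_{2-r}B^{-1}$, valid for accretive $A,B$ as noted after \eqref{geo_mean_accr}. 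Since $w(BXB)\ge \text{(something)}\cdot w(X)$ is not generally true, I expect the cleaner route is to bound $w$ of the right-hand side from below by passing to the inverse: namely $w(A\sharp_r B)\ge \cos^3\alpha\; w^{-1}\big((A\sharp_r B)^{-1}\big)$ by Lemma \ref{wA^-1}, provided $A\sharp_r B\in\mathcal S_\alpha$, which holds by Proposition \ref{sharp_r_S}. Then $(A\sharp_r B)^{-1}=A^{-1}\sharp_r B^{-1}=B^{-1}(A^{-1}\sharp_{2-r}B^{-1})^{-1}B^{-1}=B^{-1}(A\sharp_{2-r}B)B^{-1}$ again by Lemma \ref{r=2-r}.

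Next I would estimate $w\big(B^{-1}(A\sharp_{2-r}B)B^{-1}\big)$ from above. Since $B>0$, one has $w(B^{-1}XB^{-1})\le \|B^{-1}\|^2\,w(X)$? — no, that inflates in the wrong direction for a lower bound on $w(A\sharp_rB)$. The correct direction: I need an \emph{upper} bound on $w\big((A\sharp_rB)^{-1}\big)$, so an upper bound on $w\big(B^{-1}(A\sharp_{2-r}B)B^{-1}\big)$ is what is wanted, and here I would use submultiplicativity of $w$ under congruence in the crude form $w(B^{-1}YB^{-1})\le w(B^{-2})\,\|\,\cdot\,\|$-type bounds; more precisely, since $2-r\in(0,1)$, Lemma \ref{nume_sharp_inq} gives $w(A\sharp_{2-r}B)\le \sec^3\alpha\;w^{r-1}(A)w^{2-r}(B)$ — wait, the exponents are $w^{1-(2-r)}(A)w^{2-r}(B)=w^{r-1}(A)w^{2-r}(B)$. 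Combining with a congruence estimate $w\big(B^{-1}YB^{-1}\big)\le w(B^{-2})\,w(Y)$ (which holds since $B^{-1}$ commutes with itself and numerical radius is weakly unitarily invariant; this needs the standard fact $w(CYC)\le \|C\|^2 w(Y)$ together with $\|B^{-2}\|=w(B^{-2})$ as $B^{-2}>0$), I get
\[
w\big((A\sharp_rB)^{-1}\big)\le w(B^{-2})\,\sec^3\alpha\;w^{r-1}(A)\,w^{2-r}(B).
\]
Feeding this into Lemma \ref{wA^-1} yields $w(A\sharp_rB)\ge \cos^3\alpha\cdot \cos^3\alpha\;w^{-1}(B^{-2})\,w^{1-r}(A)\,w^{2-r\,-\,(2(2-r))}$... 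I must be careful: inverting $w^{r-1}(A)w^{2-r}(B)$ gives $w^{1-r}(A)w^{r-2}(B)$, matching the claimed right-hand side exactly, with total factor $\cos^6\alpha$ and the leading $w^{-1}(B^{-2})$.

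So the skeleton is: (1) $A\sharp_rB\in\mathcal S_\alpha$ by Proposition \ref{sharp_r_S}, hence Lemma \ref{wA^-1} applies; (2) rewrite $(A\sharp_rB)^{-1}=B^{-1}(A\sharp_{2-r}B)B^{-1}$ via Lemma \ref{r=2-r} and the inversion identity; (3) bound $w$ of this congruence by $w(B^{-2})\,w(A\sharp_{2-r}B)$; (4) apply Lemma \ref{nume_sharp_inq} with exponent $2-r\in(0,1)$; (5) invert and collect the constants. The main obstacle I anticipate is step (3): justifying the congruence bound $w\big(B^{-1}YB^{-1}\big)\le w(B^{-2})\,w(Y)$ cleanly. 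The safe approach is to avoid it by instead bounding $\|(A\sharp_rB)^{-1}\|$ or by using $w(B^{-1}YB^{-1})\le\|B^{-1}\|\,w(B^{-1}Y)\le\|B^{-1}\|^2 w(Y)=w(B^{-2})\,w(Y)$, using $\|B^{-1}\|^2=\|B^{-2}\|=w(B^{-2})$ since $B^{-2}$ is positive — this is elementary and the inequality $w(CY)\le\|C\|w(Y)$ is standard. With that in hand the rest is bookkeeping of exponents and secants.
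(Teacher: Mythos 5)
Your argument is essentially the paper's own proof: pass to the inverse via Lemma \ref{wA^-1} (legitimate since $A\sharp_rB\in\mathcal{S}_\alpha$ by Proposition \ref{sharp_r_S}), rewrite $(A\sharp_rB)^{-1}=B^{-1}(A\sharp_{2-r}B)B^{-1}$ using Lemma \ref{r=2-r} and the inversion identity, bound the congruence by $w(B^{-2})\,w(A\sharp_{2-r}B)$, and finish with Lemma \ref{nume_sharp_inq} at exponent $2-r\in(0,1)$; your exponent bookkeeping and the final constant $\cos^6\alpha$ are correct. The one flaw is your proposed ``safe'' justification of the congruence step: the inequality $w(CY)\le\|C\|\,w(Y)$ is false in general (take $C=\begin{pmatrix}0&0\\1&0\end{pmatrix}$ and $Y=\begin{pmatrix}0&1\\0&0\end{pmatrix}$, so that $w(CY)=1$ while $\|C\|\,w(Y)=\tfrac12$), so the chain $w(B^{-1}YB^{-1})\le\|B^{-1}\|\,w(B^{-1}Y)\le\|B^{-1}\|^2w(Y)$ does not stand. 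The correct route is the other fact you name, $w(C^*YC)\le\|C\|^2w(Y)$, which for $C=B^{-1}>0$ gives exactly $w(B^{-1}YB^{-1})\le\|B^{-2}\|\,w(Y)=w(B^{-2})\,w(Y)$; this is precisely what the paper does inline by writing $|\langle B^{-1}YB^{-1}x,x\rangle|=\|B^{-1}x\|^2\,|\langle Yu,u\rangle|$ with $u=B^{-1}x/\|B^{-1}x\|$ and maximizing the two factors separately.
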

\begin{proof} Let $x\in\mathbb{C}^n$ such that $\|x\|=1$. Then
\begin{align*}
\displaystyle
w^{-1}(A\sharp_{r} B)&\leq \sec^3\alpha\;w((A\sharp_{r} B)^{-1})=\sec^3\alpha\;w(A^{-1}\sharp_{r} B^{-1})\hspace{0.5cm} \text{(by Lemma\;\ref{wA^-1})}\\
&=\sec^3\alpha\;\max_{\|x\|=1}\left|\left<(A^{-1}\sharp_{r}B^{-1})x,x\right>\right|\\
&=\sec^3\alpha\;\max_{\|x\|=1}\left|\left<B^{-1}(A^{-1}\sharp_{2-r}B^{-1})^{-1}B^{-1}x,x\right>\right|\\
&=\sec^3\alpha\;\max_{\|x\|=1}\left|\left<B^{-1}(A\sharp_{2-r}B)B^{-1}x,x\right>\right|\\
&=\sec^3\alpha\;\max_{\|x\|=1}\left\lbrace \|B^{-1}x\|^2\left|\left<(A\sharp_{2-r}B)\dfrac{B^{-1}x}{\|B^{-1}x\|},\dfrac{B^{-1}x}{\|B^{-1}x\|}\right>\right|\right\rbrace \\
&\leq\sec^3\alpha\;\max_{\|x\|=1}\left<B^{-2}x,x\right>\;\max_{\|x\|=1}\left|\left<(A\sharp_{2-r}B)\dfrac{B^{-1}x}{\|B^{-1}x\|},\dfrac{B^{-1}x}{\|B^{-1}x\|}\right>\right|\\
&=\sec^3\alpha\; w(B^{-2})w(A\sharp_{2-r}B)\\
&\leq\sec^6\alpha\; w(B^{-2})w^{r-1}(A)w^{2-r}(B).\hspace{2.5cm} \text{(by Lemma\;\ref{nume_sharp_inq})}
\end{align*}
This implies
\begin{align*}
w(A\sharp_{-r} B)&\geq\cos^6\alpha\; w^{-1}(B^{-2})w^{1-r}(A)w^{r-2}(B)=\cos^6\alpha\; w^{-1}(B^{-2})w^{1-r}(A)w^{r-2}(B),
\end{align*}
completing the proof. 
\end{proof}

\subsection{{\color{blue}{The Case $r\in(-1,0)$}}}\hfill\\

In this subsection, we discuss geometric connection, for $r \in(-1,0).$ First, we notice the following simple identity.
\begin{proposition}\label{prop2_simp}
Let $A$ be invertible and let $0<s<1.$ Then
 $$(sI+(1-s)A)^{-1}=\dfrac{I}{s}-\dfrac{1-s}{s}(I!_sA).$$
\end{proposition}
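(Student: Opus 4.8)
The plan is to verify the identity $(sI+(1-s)A)^{-1}=\dfrac{I}{s}-\dfrac{1-s}{s}(I!_sA)$ by a direct algebraic manipulation, exactly paralleling the proof of Proposition \ref{prop_simp}. Recall that $I!_sA=((1-s)I+sA^{-1})^{-1}$, so the quantity $I!_sA$ is invertible whenever $A$ is invertible (and $0<s<1$ so the relevant combinations make sense formally). I would start from the right-hand side and factor out $(I!_sA)$ on the right.

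Concretely, the key step is to observe that
\begin{align*}
\dfrac{I}{s}-\dfrac{1-s}{s}(I!_sA)=\left[\dfrac{1}{s}\left((1-s)I+sA^{-1}\right)-\dfrac{1-s}{s}I\right](I!_sA).
\end{align*}
Inside the bracket the term $\dfrac{1-s}{s}I$ cancels against $\dfrac{1-s}{s}I$ coming from $\dfrac{1}{s}(1-s)I$, leaving $\dfrac{1}{s}\cdot sA^{-1}=A^{-1}$. Hence the right-hand side equals $A^{-1}(I!_sA)=A^{-1}((1-s)I+sA^{-1})^{-1}$. Finally, $A^{-1}((1-s)I+sA^{-1})^{-1}=\big(((1-s)I+sA^{-1})A\big)^{-1}=((1-s)A+sI)^{-1}=(sI+(1-s)A)^{-1}$, which is the desired left-hand side. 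Throughout, one only needs that $A$ commutes with $A^{-1}$ and with scalar multiples of $I$, so no positivity or normality is required; invertibility of $A$ is the only hypothesis used.

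There is no real obstacle here: the statement is a formal identity in the (commutative, up to the scalars involved) algebra generated by $A$ and $A^{-1}$, and the only thing to be slightly careful about is the order in which inverses are taken when rewriting $A^{-1}((1-s)I+sA^{-1})^{-1}$ as a single inverse — but since $A^{-1}$ and $(1-s)I+sA^{-1}$ commute, the reordering $(XY)^{-1}=Y^{-1}X^{-1}=X^{-1}Y^{-1}$ is harmless. I would present the computation as a single displayed \texttt{align*} chain of equalities from $\dfrac{I}{s}-\dfrac{1-s}{s}(I!_sA)$ down to $(sI+(1-s)A)^{-1}$, then close with ``This completes the proof.''
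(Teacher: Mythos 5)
Your proof is correct and is essentially identical to the paper's own argument: the paper also factors $(I!_sA)$ out on the right, simplifies the bracket to $A^{-1}$, and concludes $A^{-1}(I!_sA)=(sI+(1-s)A)^{-1}$. No issues.
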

\begin{proof} We have
\begin{align*}
\dfrac{I}{s}-\dfrac{1-s}{s}(I!_sA)&=\left[ \dfrac{I}{s}((1-s)I+sA^{-1})-\dfrac{1-s}{s}\right](I!_sA)
=A^{-1}(I!_sA)=(sI+(1-s)A)^{-1}=(sI+(1-s)A)^{-1},
\end{align*} 
which completes the proof.
\end{proof}
The following is the integral representation of $A^r$, when $A$ is accretive and $r\in (-1,0).$
\begin{theorem}\label{A^-r}
Let $A\in\mathcal{M}_n$ be accretive and let $r\in(-1,0).$ Then
$$A^{r}=\int_{0}^{1}(sI+(1-s)A)^{-1}d\nu(s),\;{\text{where}}\;d\nu(s)=\frac{\sin(r+1)\pi}{\pi}\frac{s^{r}}{(1-s)^{r+1}}ds.$$
\end{theorem}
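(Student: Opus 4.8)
The plan is to mimic exactly the structure of the proof of Proposition \ref{A^r}, but starting from the integral representation of $A^\lambda$ for $\lambda\in(0,1)$ and pushing the exponent down by one rather than up by one. Concretely, I would begin by recording the known formula from \cite{bedr}: for $\lambda\in(0,1)$,
\begin{align*}
A^{\lambda}=\int_{0}^{1}(I!_{s}A)\,d\mu_{\lambda}(s),\qquad d\mu_{\lambda}(s)=\frac{\sin\lambda\pi}{\pi}\frac{s^{\lambda-1}}{(1-s)^{\lambda}}\,ds.
\end{align*}
For $r\in(-1,0)$ we have $r+1\in(0,1)$, so I would apply this with $\lambda=r+1$ and then multiply by $A^{-1}$ on one side to obtain $A^r=A^{-1}A^{r+1}=A^{-1}\int_0^1 (I!_sA)\,d\mu_{r+1}(s)$. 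The key computational step is the pointwise identity $A^{-1}(I!_sA)=(sI+(1-s)A)^{-1}$, which is precisely the content hidden inside Proposition \ref{prop2_simp} (indeed $A^{-1}(I!_sA)=A^{-1}((1-s)I+sA^{-1})^{-1}=(s(1-s)A+sI)^{-1}\cdot\!$, and after clearing the bookkeeping one gets $(sI+(1-s)A)^{-1}$ up to the harmless relabelling of the variable of integration). Substituting this into the integral gives
\begin{align*}
A^r=\int_0^1 (sI+(1-s)A)^{-1}\,d\mu_{r+1}(s),
\end{align*}
and it only remains to check that $d\mu_{r+1}(s)=\frac{\sin(r+1)\pi}{\pi}\frac{s^{(r+1)-1}}{(1-s)^{r+1}}\,ds=\frac{\sin(r+1)\pi}{\pi}\frac{s^{r}}{(1-s)^{r+1}}\,ds=d\nu(s)$, which is an exact match with the claimed measure.

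A subtlety worth addressing explicitly is the validity of the interchange/multiplication: one should note that the integral representation of $A^{r+1}$ converges in, say, the operator norm (as established in \cite{bedr} for accretive $A$, where $sI+(1-s)A$ is invertible for $s\in(0,1)$ because $\Re A>0$ forces $\Re(sI+(1-s)A)>0$), and that $A^{-1}$ is a fixed bounded operator, so $A^{-1}\int_0^1(\cdots)=\int_0^1 A^{-1}(\cdots)$ is justified by linearity and continuity of left multiplication by $A^{-1}$. I would also remark that $sI+(1-s)A$ is accretive for every $s\in(0,1]$, so its inverse is well defined and the integrand makes sense; near the endpoints the singularities $s^{r}$ and $(1-s)^{-(r+1)}$ are integrable precisely because $r\in(-1,0)$ makes both exponents lie in $(-1,0)$.

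The main obstacle—such as it is—is not conceptual but a matter of getting the endpoint behaviour and the normalization constant exactly right: one must confirm that the resulting measure $d\nu$ is the stated one (in particular that it has total mass compatible with the scalar identity $a^r=\int_0^1(s+(1-s)a)^{-1}d\nu(s)$ for $a>0$, which is the standard beta-function evaluation $\int_0^1 s^{r}(1-s)^{-(r+1)}(s+(1-s)a)^{-1}\,ds=\frac{\pi}{\sin(r+1)\pi}a^r$), and that no sign or shift error creeps in when converting between $\lambda$ and $r$. Since the whole identity is verified on scalars first (diagonalize, or use the spectral theorem on the sectorial "sector" via the holomorphic functional calculus), the matrix statement follows once the scalar one is checked, exactly as in the proof of Proposition \ref{A^r}. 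So I would close by saying: simplifying the last identity and matching constants yields the desired representation, which completes the proof.
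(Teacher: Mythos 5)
Your proposal is correct and follows essentially the same route as the paper: write $A^{r}=A^{-1}A^{r+1}$ with $r+1\in(0,1)$, insert the integral representation $A^{r+1}=\int_0^1 (I!_sA)\,d\mu_{r+1}(s)$ from \cite{bedr}, and use the identity $A^{-1}(I!_sA)=(sI+(1-s)A)^{-1}$ to land on the stated measure $d\nu=d\mu_{r+1}$. The extra remarks on integrability near the endpoints and on the invertibility of $sI+(1-s)A$ are sound and, if anything, more careful than the paper's own argument.
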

\begin{proof} We know that if $\lambda\in(0,1),$ then  \cite{bedr}
$$A^{\lambda}=\int_{0}^{1}(I!_{s}A)d\mu_{\lambda}(s),\;{\text{where}}\;d\mu_{\lambda}(s)=\frac{\sin\lambda\pi}{\pi}\frac{s^{\lambda-1}}{(1-s)^{\lambda}}ds.$$
Now if $r\in(-1,0),$ we can write
\begin{align*}
A^{r}&=A^{-1}A^{\lambda},\; 0<\lambda=1+r< 1\\
&=A^{-1}\int_{0}^{1}(I!_{s}A)d\mu_{\lambda}(s) 
=\int_{0}^{1}(sI+(1-s)A)^{-1}d\nu(s),
\end{align*} 
which completes the proof.
\end{proof}
The above theorem enables the following integral representation of $A\sharp_rB,$ when $A,B$ are accretive and $r\in (-1,0).$
\begin{proposition}\label{Asharp_-r} Let $ A,B\in\mathcal{M}_n$ be accretive and let $r\in(-1,0)$. Then
\begin{align*}
    A\sharp_{r}B&= \int^1_0\left[\dfrac{A}{s}-\dfrac{1-s}{s}(A!_sB)\right] d\nu(s)\\
    &=\int^1_0\left((1-s)A^{-1}BA^{-1}+sA^{-1}\right)^{-1} d\nu(s),
\end{align*}
for some probability mesure $\nu(s)$ on $[0,1]$ 
\end{proposition}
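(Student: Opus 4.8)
The plan is to mimic the proof of Proposition \ref{prop_geo_r_1} exactly, but using Theorem \ref{A^-r} and Proposition \ref{prop2_simp} in place of Proposition \ref{A^r} and Proposition \ref{prop_simp}. First I would establish the second equality, namely that
$$
A\sharp_r B=\int_0^1\left((1-s)A^{-1}BA^{-1}+sA^{-1}\right)^{-1}d\nu(s).
$$
For this, factor $A^{-1/2}$ out of the integrand on the left and right: writing $C=A^{-1/2}BA^{-1/2}$, one has
$$
(1-s)A^{-1}BA^{-1}+sA^{-1}=A^{-1/2}\bigl((1-s)C^{2}+sC\bigr)A^{-1/2}=A^{-1/2}C\bigl((1-s)C+sI\bigr)A^{-1/2},
$$
so its inverse is $A^{1/2}\bigl((1-s)C+sI\bigr)^{-1}C^{-1}A^{1/2}$. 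Pulling $A^{1/2}$ out of the integral on both sides reduces the claim to
$$
C^{r}=\int_0^1\bigl(sI+(1-s)C\bigr)^{-1}C^{-1}\,d\nu(s)=\left(\int_0^1\bigl(sI+(1-s)C\bigr)^{-1}d\nu(s)\right)C^{-1},
$$
which is precisely Theorem \ref{A^-r} applied to the accretive matrix $C$ (note $C$ is accretive since $B$ is and congruence by $A^{-1/2}$ preserves accretivity — though here one only needs the integral identity, which holds for any accretive argument), multiplied on the right by $C^{-1}$, using $C^{r-1}\cdot C^{0}$... more carefully, $C^{r}=C^{r+1}C^{-1}$ and $r+1\in(0,1)$, but Theorem \ref{A^-r} already gives $C^{r}$ directly for $r\in(-1,0)$, so in fact one simply substitutes $A\mapsto C$ there and then right-multiplies the integrand by... no: the cleanest route is to observe $\bigl(sI+(1-s)C\bigr)^{-1}C^{-1}=\bigl(sC+(1-s)C^{2}\bigr)^{-1}$ and that $C^{r}$ equals $C^{-1}$ times $C^{r+1}$ where the latter, by the $\lambda$-representation with $\lambda=r+1$, equals $\int_0^1 (I!_sC)\,d\mu_{\lambda}(s)$; then $C^{-1}(I!_sC)=\bigl(sI+(1-s)C\bigr)^{-1}$ and a change of the measure constant from $\mu_{\lambda}$ to $\nu$ finishes it. This is exactly the computation already performed inside the proof of Theorem \ref{A^-r}, so I would just cite it.

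Having the second equality, I would derive the first equality from it via Proposition \ref{prop2_simp}. Apply that proposition with $A$ replaced by $A^{-1/2}BA^{-1/2}=C$: it gives $\bigl(sI+(1-s)C\bigr)^{-1}=\tfrac{I}{s}-\tfrac{1-s}{s}(I!_sC)$. Conjugating by $A^{1/2}$ on both sides and using that congruence commutes with the harmonic mean in the sense that $A^{1/2}(I!_sC)A^{1/2}=A!_sB$ (which follows directly from the definition $A!_sB=((1-s)A^{-1}+sB^{-1})^{-1}$ and the factorization $(1-s)A^{-1}+sB^{-1}=A^{-1/2}((1-s)I+sC^{-1})A^{1/2}$... again via $C$), together with $A^{1/2}\cdot\tfrac{I}{s}\cdot A^{1/2}=\tfrac{A}{s}$, yields
$$
A^{1/2}\bigl(sI+(1-s)C\bigr)^{-1}A^{1/2}\cdot C^{-1}\cdot\dots
$$
— here I need to be a little careful because the integrand on the left of the second equality is not $A^{1/2}\bigl(sI+(1-s)C\bigr)^{-1}A^{1/2}$ but $A^{1/2}\bigl(sI+(1-s)C\bigr)^{-1}C^{-1}A^{1/2}$. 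The clean way: from $C^{r}=\int_0^1\bigl(sI+(1-s)C\bigr)^{-1}C^{-1}d\nu(s)$, write $\bigl(sI+(1-s)C\bigr)^{-1}C^{-1}$ using the elementary identity $X^{-1}C^{-1}=(CX)^{-1}$ with $X=sI+(1-s)C$, and note $CX=sC+(1-s)C^2$; alternatively, since $C^{r}=C^{-1}C^{r+1}$ and Proposition \ref{A^r}-style manipulations apply, I would instead directly insert Proposition \ref{prop2_simp} in the form it will be used: the integrand $\bigl((1-s)A^{-1}BA^{-1}+sA^{-1}\bigr)^{-1}$ equals $A(sA^{-1}\cdot A+(1-s)A^{-1}B)^{-1}\cdot$... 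Let me just say: one checks the pointwise identity
$$
\bigl((1-s)A^{-1}BA^{-1}+sA^{-1}\bigr)^{-1}=\dfrac{A}{s}-\dfrac{1-s}{s}(A!_sB)
$$
by the same two-line algebra as in Proposition \ref{prop2_simp} (multiply out $\bigl[\tfrac{A}{s}\bigl((1-s)A^{-1}+sB^{-1}\bigr)-\tfrac{1-s}{s}I\bigr](A!_sB)$ and simplify to $A(A!_sB)$, then to $((1-s)A^{-1}BA^{-1}+sA^{-1})^{-1}$), and then integrate against $d\nu(s)$.

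So the two key steps are: (i) the pointwise matrix identity $\bigl((1-s)A^{-1}BA^{-1}+sA^{-1}\bigr)^{-1}=\tfrac{A}{s}-\tfrac{1-s}{s}(A!_sB)$, an immediate analogue of Proposition \ref{prop2_simp}; and (ii) the fact that integrating $\bigl((1-s)A^{-1}BA^{-1}+sA^{-1}\bigr)^{-1}$ against $d\nu(s)$ reproduces $A\sharp_rB$, which is the congruence-by-$A^{1/2}$ transplant of Theorem \ref{A^-r} applied to $A^{-1/2}BA^{-1/2}$. Neither step is genuinely hard; the only thing to watch is the bookkeeping of which congruence factor lands where, and confirming the probability measure $\nu$ is the same one from Theorem \ref{A^-r} (it is, since the normalizing constant $\tfrac{\sin(r+1)\pi}{\pi}\int_0^1 s^{r}(1-s)^{-r-1}ds=1$ is exactly what makes $A^{r}$ reduce to $I$ when $A=I$). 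The main obstacle, if any, is purely notational: making sure the inner integrand $A^{-1/2}BA^{-1/2}\bigl((1-s)A^{-1/2}BA^{-1/2}+sI\bigr)^{-1}A^{-1/2}BA^{-1/2}$ is correctly recognized as $\bigl(sI+(1-s)A^{-1/2}BA^{-1/2}\bigr)^{-1}$ conjugated appropriately — i.e. that $C(sI+(1-s)C)^{-1}C=(sC^{-1}+(1-s)I\cdot C^{-1}\cdot\dots)$ — so I would do that simplification slowly and cite Theorem \ref{A^-r} for the resulting scalar-coefficient integral.

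\begin{proof}
We first prove the second identity. Put $C=A^{-\frac12}BA^{-\frac12}$, which is accretive. For $s\in(0,1)$ we have
$$
(1-s)A^{-1}BA^{-1}+sA^{-1}=A^{-\frac12}\bigl((1-s)C^{2}+sC\bigr)A^{-\frac12}=A^{-\frac12}C\bigl((1-s)C+sI\bigr)A^{-\frac12},
$$
hence
$$
\bigl((1-s)A^{-1}BA^{-1}+sA^{-1}\bigr)^{-1}=A^{\frac12}\bigl(sI+(1-s)C\bigr)^{-1}C^{-1}A^{\frac12}.
$$
Integrating against $d\nu(s)$ and using Theorem \ref{A^-r} applied to $C$ in the equivalent form
$$
\int_0^1\bigl(sI+(1-s)C\bigr)^{-1}C^{-1}\,d\nu(s)=\left(\int_0^1\bigl(sI+(1-s)C\bigr)^{-1}d\nu(s)\right)C^{-1}=C^{r+1}C^{-1}=C^{r},
$$
we obtain
$$
\int^1_0\bigl((1-s)A^{-1}BA^{-1}+sA^{-1}\bigr)^{-1}d\nu(s)=A^{\frac12}C^{r}A^{\frac12}=A^{\frac12}\left(A^{-\frac12}BA^{-\frac12}\right)^{r}A^{\frac12}=A\sharp_{r}B.
$$
For the first identity, it suffices, by the previous display, to verify the pointwise equality
$$
\bigl((1-s)A^{-1}BA^{-1}+sA^{-1}\bigr)^{-1}=\dfrac{A}{s}-\dfrac{1-s}{s}(A!_sB),\qquad s\in(0,1).
$$
Indeed, since $A!_sB=\bigl((1-s)A^{-1}+sB^{-1}\bigr)^{-1}$,
$$
\dfrac{A}{s}-\dfrac{1-s}{s}(A!_sB)=\left[\dfrac{A}{s}\bigl((1-s)A^{-1}+sB^{-1}\bigr)-\dfrac{1-s}{s}I\right](A!_sB)=A B^{-1}(A!_sB).
$$
On the other hand $AB^{-1}(A!_sB)=AB^{-1}\bigl((1-s)A^{-1}+sB^{-1}\bigr)^{-1}=\bigl((1-s)B A^{-1}\cdot A^{-1}\dots\bigr)$; more directly, multiplying $(1-s)A^{-1}BA^{-1}+sA^{-1}$ on the left by $AB^{-1}(A!_sB)$ gives
$$
AB^{-1}(A!_sB)\bigl((1-s)A^{-1}BA^{-1}+sA^{-1}\bigr)=AB^{-1}(A!_sB)A^{-1}\bigl((1-s)BA^{-1}+sI\bigr),
$$
and since $(A!_sB)A^{-1}\bigl((1-s)BA^{-1}+sI\bigr)=(A!_sB)\bigl((1-s)A^{-1}B+sI\bigr)A^{-1}=(A!_sB)\bigl((1-s)A^{-1}+sB^{-1}\bigr)B A^{-1}=BA^{-1}$, the product equals $AB^{-1}\cdot BA^{-1}=I$. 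Thus $\dfrac{A}{s}-\dfrac{1-s}{s}(A!_sB)=\bigl((1-s)A^{-1}BA^{-1}+sA^{-1}\bigr)^{-1}$, and integrating against $d\nu(s)$ yields
$$
\int^1_0\left[\dfrac{A}{s}-\dfrac{1-s}{s}(A!_sB)\right]d\nu(s)=A\sharp_{r}B.
$$
This completes the proof.
\end{proof}
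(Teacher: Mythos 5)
Your overall strategy (conjugate by $A^{\frac{1}{2}}$ and invoke Theorem \ref{A^-r} for $C=A^{-\frac{1}{2}}BA^{-\frac{1}{2}}$) is exactly the paper's, and your verification of the pointwise identity $\bigl((1-s)A^{-1}BA^{-1}+sA^{-1}\bigr)^{-1}=\frac{A}{s}-\frac{1-s}{s}(A!_sB)$ is correct (the paper does not even spell this step out). However, the central computation asserts two false intermediate identities. First, the factorization is wrong: since $A^{-\frac{1}{2}}CA^{-\frac{1}{2}}=A^{-1}BA^{-1}$ and $A^{-\frac{1}{2}}IA^{-\frac{1}{2}}=A^{-1}$, one has
$$(1-s)A^{-1}BA^{-1}+sA^{-1}=A^{-\frac{1}{2}}\bigl((1-s)C+sI\bigr)A^{-\frac{1}{2}},$$
whereas your expression $A^{-\frac{1}{2}}\bigl((1-s)C^{2}+sC\bigr)A^{-\frac{1}{2}}$ equals $(1-s)A^{-1}BA^{-1}BA^{-1}+sA^{-1}BA^{-1}$; the spurious extra factor of $C$ seems to have been imported from the $r\in(1,2)$ case (Proposition \ref{prop_geo_r_1}), where the integrand $\bigl((1-s)B^{-1}+sB^{-1}AB^{-1}\bigr)^{-1}$ genuinely produces such factors. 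Second, you then claim $\int_0^1\bigl(sI+(1-s)C\bigr)^{-1}d\nu(s)=C^{r+1}$, but Theorem \ref{A^-r} applied to $C$ says this integral equals $C^{r}$, not $C^{r+1}$ (your own preliminary discussion states this correctly). These two errors cancel, since $C^{r+1}C^{-1}=C^{r}$ is exactly what the correct one-step computation gives directly, so your final formula is right; but as written the proof passes through identities that are false for generic $A,B$. The repair is immediate: delete the factor $C^{-1}$ from the inverted integrand and quote Theorem \ref{A^-r} as stated, after which your argument coincides with the paper's.
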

\begin{proof} We have 
\begin{align*}
 & \int^1_0\left((1-s)A^{-1}BA^{-1}+sA^{-1}\right) d\nu(s)\\
 &=A^{\frac{1}{2}}\left( \int^1_0\left((1-s)A^{-\frac{1}{2}}BA^{-\frac{1}{2}}+sI\right)^{-1} d\nu(s)\right) A^{\frac{1}{2}}\\
 &=A^{\frac{1}{2}} \left(A^{\frac{-1}{2}}BA^{\frac{-1}{2}}\right)^{r}A^{\frac{1}{2}}.\hspace{2cm}\text{(by Theorem\;\ref{A^-r})}\\
 &= A\sharp_{r}B.
\end{align*}
This completes the proof.
\end{proof}

Now we have the reversed version of \eqref{raisso_inq}.
\begin{theorem}\label{Rsharp_-r} Let $ A,B\in\mathcal{M}_n$ be accretive and let $r\in(-1,0)$. Then
\begin{align*}
\Re(A\sharp_{r}B)\leq \Re A\sharp_{r}\Re B.
\end{align*}
\end{theorem}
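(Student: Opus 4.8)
The plan is to mimic exactly the proof of Theorem \ref{Rsharp_<sharR}, but now using the integral representation supplied by Proposition \ref{Asharp_-r} in place of Proposition \ref{prop_geo_r_1}. Concretely, write
\begin{align*}
A\sharp_{r}B=\int^1_0\left[\dfrac{A}{s}-\dfrac{1-s}{s}(A!_sB)\right] d\nu(s),
\end{align*}
where $\nu$ is a probability measure on $[0,1]$ and each weight $\dfrac{1-s}{s}$ is nonnegative on $(0,1)$.

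First I would take the real part inside the integral: since $\Re$ is a real-linear map and the integrand depends linearly (over $\mathbb{R}$) on $A$ and on $A!_sB$, we get
\begin{align*}
\Re(A\sharp_{r}B)=\int^1_0\left[\dfrac{\Re A}{s}-\dfrac{1-s}{s}\,\Re(A!_sB)\right] d\nu(s).
\end{align*}
Next, for each fixed $s\in(0,1)$ apply Lemma \ref{A!B>RA!RB} (which gives $\Re(A!_sB)\geq \Re A\,!_s\,\Re B$ for $A,B\in\mathcal S_\alpha$, hence in particular for accretive $A,B$, as they are sectorial for some $\alpha$); because the coefficient $\dfrac{1-s}{s}$ is positive, this reverses to
\begin{align*}
\dfrac{\Re A}{s}-\dfrac{1-s}{s}\,\Re(A!_sB)\leq \dfrac{\Re A}{s}-\dfrac{1-s}{s}\,(\Re A\,!_s\,\Re B).
\end{align*}
Integrating this pointwise operator inequality against the positive measure $\nu$ preserves the order, so
\begin{align*}
\Re(A\sharp_{r}B)\leq \int^1_0\left[\dfrac{\Re A}{s}-\dfrac{1-s}{s}\,(\Re A\,!_s\,\Re B)\right] d\nu(s).
\end{align*}
Finally, recognize the right-hand side, via Proposition \ref{Asharp_-r} applied to the positive matrices $\Re A$ and $\Re B$ (for which the representation certainly holds), as exactly $\Re A\sharp_{r}\Re B$; note that since $\Re A\sharp_r\Re B$ is already Hermitian, taking its real part changes nothing. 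This chain yields the claimed inequality.

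The only genuinely delicate point — and the one I would double-check — is the justification for moving $\Re$ and the order relation through the integral and for reading the final integral as $\Re A\sharp_r\Re B$: this relies on the convergence of the integral $\int_0^1 (sI+(1-s)A)^{-1}\,d\nu(s)$ near $s=0$ (where the density $\dfrac{s^{r}}{(1-s)^{r+1}}$ blows up since $r\in(-1,0)$) and near $s=1$, together with the fact that $\nu$ is indeed a probability measure, both of which are already established in Theorem \ref{A^-r} and Proposition \ref{Asharp_-r}. Everything else is a verbatim repetition of the $r\in(1,2)$ argument, so there is no real obstacle beyond bookkeeping; one just has to be careful that the sign of the auxiliary coefficient ($\dfrac{1-s}{s}>0$ here, matching $\dfrac{s}{1-s}>0$ in the earlier proof) makes the inequality go in the stated direction.
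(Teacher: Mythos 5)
Your proposal is correct and is essentially identical to the paper's own proof: both use the integral representation of Proposition \ref{Asharp_-r}, push $\Re$ through the integral, apply Lemma \ref{A!B>RA!RB} pointwise with the positive weight $\frac{1-s}{s}$ reversing the inequality, and reassemble the result as $\Re A\sharp_r\Re B$. Your additional remarks on convergence of the integral near the endpoints and on accretive matrices being sectorial for some $\alpha$ are sound but not needed beyond what the cited propositions already supply.
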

\begin{proof} We have 
\begin{align*}
\Re(A\sharp_{r}B)&=\Re\left(  \int^1_0\left[\dfrac{A}{s}-\dfrac{1-s}{s}(A!_sB)\right] d\nu(s)\right) \hspace{2cm}\text{(by Proposition\;\ref{Asharp_-r})}\\
&= \int^1_0\left[\dfrac{\Re A}{s}-\dfrac{1-s}{s}\Re(A!_sB)\right] d\nu(s)\\
&\leq\int^1_0\left[\dfrac{\Re A}{s}-\dfrac{1-s}{s}(\Re A!_s\Re  B)\right] d\nu(s)\hspace{2cm}\text{(by Lemma\;\ref{A!B>RA!RB})}\\
&=\Re A\sharp_{r}\Re B,\hspace{4cm}\text{(by Proposition\;\ref{Asharp_-r})}
\end{align*}
completing the proof.
\end{proof}
The proof of the following proposition follows the same logic as that of Proposition \ref{sharp_r_S}.
\begin{proposition} \label{sharp_-r_S}Let $B\in\mathcal{S}_\alpha, A\in\mathcal{M}_n^{+}$ and let $r\in(-1,0)$. Then $A\sharp_{r}B\in \mathcal{S}_\alpha$.
\end{proposition}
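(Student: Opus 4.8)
The plan is to mirror the proof of Proposition~\ref{sharp_r_S}, exchanging the roles of $A$ and $B$ and using the integral representation from Proposition~\ref{Asharp_-r} in place of the one from Proposition~\ref{prop_geo_r_1}. First I would show that the matrix $A^{-1}BA^{-1}$ belongs to $\mathcal{S}_\alpha$: since $B\in\mathcal{S}_\alpha$ and $A\in\mathcal{M}_n^+$, for any nonzero $x\in\mathbb{C}^n$ we have $\langle A^{-1}BA^{-1}x,x\rangle=\langle B(A^{-1}x),A^{-1}x\rangle\in S_\alpha$, because $A^{-1}x\neq 0$ and $W(B)\subset S_\alpha$. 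Hence $A^{-1}BA^{-1}\in\mathcal{S}_\alpha$, and therefore so is $(1-s)A^{-1}BA^{-1}+sA^{-1}$ for each $s\in[0,1]$, since $\mathcal{S}_\alpha$ is closed under sums and under multiplication of $A^{-1}\in\mathcal{M}_n^+\subset\mathcal{S}_\alpha$ by nonnegative scalars; being closed under inversion as well, the integrand $\left((1-s)A^{-1}BA^{-1}+sA^{-1}\right)^{-1}$ lies in $\mathcal{S}_\alpha$ for every $s$.

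Next I would invoke Proposition~\ref{Asharp_-r}, which gives
$$A\sharp_{r}B=\int_0^1\left((1-s)A^{-1}BA^{-1}+sA^{-1}\right)^{-1}d\nu(s)$$
for a probability measure $\nu$ on $[0,1]$. For a unit vector $x$, set $g(s)=\left\langle\left((1-s)A^{-1}BA^{-1}+sA^{-1}\right)^{-1}x,x\right\rangle$, so that $\langle(A\sharp_r B)x,x\rangle=\int_0^1 g(s)\,d\nu(s)=a+ib$ with $a=\Re\int_0^1 g(s)\,d\nu(s)$ and $b=\Im\int_0^1 g(s)\,d\nu(s)$. By the previous paragraph, $g(s)\in S_\alpha$ for each $s$, so $\Re g(s)>0$ and $|\Im g(s)|\leq\tan\alpha\,\Re g(s)$. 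Then $a=\int_0^1\Re g(s)\,d\nu(s)>0$ (using that $\nu$ is a probability measure and $\Re g>0$), and
$$|b|=\left|\Im\int_0^1 g(s)\,d\nu(s)\right|\leq\int_0^1|\Im g(s)|\,d\nu(s)\leq\tan\alpha\int_0^1\Re g(s)\,d\nu(s)=\tan\alpha\;a.$$
This shows $\langle(A\sharp_r B)x,x\rangle\in S_\alpha$ for every unit vector $x$, i.e. $W(A\sharp_r B)\subset S_\alpha$, which is the claim.

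I do not expect a genuine obstacle here; the only points requiring a little care are the facts that $\mathcal{S}_\alpha$ is stable under inversion and under addition (already used and cited in the proof of Proposition~\ref{sharp_r_S}) and that a probability-measure average of points in the closed-under-sums sector $S_\alpha$ again lies in $S_\alpha$, which is immediate from the linearity of $\Re$ and $\Im$ together with the triangle inequality for $|\Im(\cdot)|$ as displayed above. Since the paper explicitly states that the proof follows the same logic as Proposition~\ref{sharp_r_S}, it suffices to record these substitutions rather than repeat every detail.
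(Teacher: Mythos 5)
Your proof is correct and is exactly the argument the paper intends: the paper gives no separate proof for this proposition, stating only that it "follows the same logic" as Proposition~\ref{sharp_r_S}, and your write-up carries out precisely that adaptation (congruence of $B$ by $A^{-1}$ to get $A^{-1}BA^{-1}\in\mathcal{S}_\alpha$, stability of the sector under nonnegative combinations and inversion, and averaging the integrand from Proposition~\ref{Asharp_-r} against the probability measure $\nu$).
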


When $A\in\mathcal{M}_n^{+}$ and $B\in\mathcal{S}_\alpha$, we have the following reverse of Theorem \ref{Rsharp_-r}.
\begin{theorem}\label{inve_Rsharp-r} Let $B\in\mathcal{S}_\alpha$ and $A\in\mathcal{M}_n^+ $. Then for $r\in(-1,0),$
\begin{align}
\cos\alpha\;(\Re A\sharp_{r}\Re B)\leq\Re (A\sharp_{r} B).
\end{align}
\end{theorem}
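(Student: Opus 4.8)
The plan is to mimic the structure of the proof of Theorem \ref{inve_Rsharpr}, replacing Lemma \ref{r=2-r} by the identity \eqref{-t} which plays the analogous role in the range $r\in(-1,0)$. Writing $s=-r\in(0,1)$, the identity \eqref{-t} reads $A\sharp_{r}B=A(A^{-1}\sharp_{s}B^{-1})A$ for accretive $A,B$; since here $A>0$ we have $A=\Re A$ and this gives a clean handle on the real part.

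First I would fix a unit vector $x\in\mathbb{C}^n$ and compute, using $A=\Re A=A^{*}$,
\begin{align*}
\langle \Re(A\sharp_{r}B)x,x\rangle
=\Re\langle A(A^{-1}\sharp_{-r}B^{-1})Ax,x\rangle
=\langle \Re(A^{-1}\sharp_{-r}B^{-1})\,Ax,\,Ax\rangle,
\end{align*}
where the last step uses that $A$ is Hermitian so that $\langle A\,C\,Ax,x\rangle=\langle C\,(Ax),(Ax)\rangle$ and taking real parts passes the Hermitian conjugation through. Next, since $-r\in(0,1)$ and $A^{-1},B^{-1}$ are accretive (inverses of accretive matrices are accretive), I would apply \eqref{raisso_inq} to get $\Re(A^{-1}\sharp_{-r}B^{-1})\geq \Re(A^{-1})\sharp_{-r}\Re(B^{-1})$. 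Then, because $A>0$ gives $\Re(A^{-1})=A^{-1}=(\Re A)^{-1}$, and because $B\in\mathcal S_\alpha$ gives $\Re(B^{-1})\geq \cos^{2}\alpha\,(\Re B)^{-1}$ by Lemma \ref{RA}, together with monotonicity of $\sharp_{-r}$ on positive matrices for the exponent $-r\in(0,1)$, I obtain
\begin{align*}
\Re(A^{-1}\sharp_{-r}B^{-1})
\;\geq\; (\Re A)^{-1}\sharp_{-r}\big(\cos^{2}\alpha\,(\Re B)^{-1}\big)
\;=\;\cos^{2}\alpha\;\big((\Re A)^{-1}\sharp_{-r}(\Re B)^{-1}\big),
\end{align*}
using the homogeneity $(\lambda X)\sharp_{t}Y=\lambda^{1-t}\,X\sharp_{t}(\lambda^{-1}\cdot\lambda Y)$ — more precisely $X\sharp_{t}(\lambda Y)=\lambda^{t}(X\sharp_{t}Y)$ with $t=-r$ and $\lambda=\cos^{2}\alpha$, which produces the factor $\cos^{-2r}\alpha\geq\cos^{2}\alpha$ since $-1<r<0$ forces $0<-2r<2$; I should double-check which power of $\cos\alpha$ survives, as this is exactly where the constant in the statement ($\cos\alpha$, i.e. $\cos^{1}\alpha$ after one more application of Lemma \ref{RA}) gets pinned down. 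Substituting back and using $A=\Re A$ once more to fold the outer $A$'s into $\Re B$... wait — the outer conjugating matrices are $A$, not $B$; so I would instead write $(\Re A)^{-1}\sharp_{-r}(\Re B)^{-1}$ and reassemble via \eqref{-t} applied to the positive pair $(\Re A,\Re B)$: namely $\Re A\sharp_{r}\Re B=\Re A\big((\Re A)^{-1}\sharp_{-r}(\Re B)^{-1}\big)\Re A$, so that $\langle \big((\Re A)^{-1}\sharp_{-r}(\Re B)^{-1}\big)Ax,Ax\rangle=\langle(\Re A\sharp_{r}\Re B)x,x\rangle$ since $A=\Re A$.

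The only genuinely delicate point is bookkeeping the exponent of $\cos\alpha$: one application of Lemma \ref{RA} to $B^{-1}$ contributes $\cos^{2}\alpha$ inside the mean, which after the scaling law $X\sharp_{-r}(\lambda Y)=\lambda^{-r}X\sharp_{-r}Y$ becomes $\cos^{-2r}\alpha$, and since $-2r\in(0,2)$ and $0\le\cos\alpha\le 1$ this is $\ge\cos^{2}\alpha$; one then needs $\cos^{2}\alpha\ge\cos\alpha$, which is \emph{false} — so the correct reading must be that the scaling gives $\cos^{-2r}\alpha$ and we bound $\cos^{-2r}\alpha\ge\cos\alpha$ only for suitable $r$, OR (more likely, matching Theorem \ref{inve_Rsharpr}) that a single factor of Lemma \ref{RA} on the \emph{resulting} mean yields $\cos\alpha$ overall. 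I expect the clean route is: do not scale prematurely; apply \eqref{raisso_inq}, then $\Re(A^{-1})=(\Re A)^{-1}$ exactly (as $A>0$), then Lemma \ref{RA} to replace $\Re(B^{-1})$ by $(\Re B)^{-1}$ at the cost of a single $\sec\alpha$, i.e. $\Re(B^{-1})\ge \cos\alpha\,(\Re B)^{-1}$ is the form to use, giving the factor $\cos\alpha$ after one more use of operator monotonicity — and then $\cos^{-r}\alpha\ge\cos\alpha$ holds since $-r\in(0,1)$. Verifying this last elementary inequality and making sure the $\sharp_{-r}$-monotonicity and scaling lemmas are legitimately available for positive matrices with exponent in $(0,1)$ is the main thing to get right; everything else is a transcription of the $r\in(1,2)$ argument.
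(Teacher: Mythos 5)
Your proposal is correct and, in its final ``clean route,'' follows essentially the same path as the paper: conjugate via \eqref{-t}, pass to $\left<\Re(A^{-1}\sharp_{-r}B^{-1})Ax,Ax\right>$, apply \eqref{raisso_inq}, use $\Re(A^{-1})=(\Re A)^{-1}$ (since $A>0$) together with Lemma \ref{RA} on $B$, and reassemble with \eqref{-t} for the positive pair $(\Re A,\Re B)$. Your worry about the exponent of $\cos\alpha$ is well placed: the paper inserts $\cos^2\alpha$ into the second slot of the mean, which by the scaling law $X\sharp_{-r}(\lambda Y)=\lambda^{-r}(X\sharp_{-r}Y)$ produces $\cos^{-2r}\alpha$, and $\cos^{-2r}\alpha\geq\cos\alpha$ fails for $r<-\tfrac{1}{2}$; your resolution --- take only the single factor $\cos\alpha$ that Lemma \ref{RA} actually provides, so the scaling yields $\cos^{-r}\alpha\geq\cos\alpha$ for all $-r\in(0,1)$ --- is the correct way to pin down the stated constant and in fact repairs the paper's own bookkeeping.
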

\begin{proof} For $r\in(-1,0)$ and using \eqref{-t}, we have for any  vector $x\in\mathbb{C}^n$,
\begin{align*}
\left<\Re(A\sharp_{r}B)x,x\right>&=\Re\left<A(A^{-1}\sharp_{-r}B^{-1})Ax,x\right>\\
&=\left<\Re(A^{-1}\sharp_{-r}B^{-1})Ax,Ax\right>\\
&\geq \left<(\Re(A^{-1})\sharp_{-r}\Re(B^{-1}))Ax,Ax\right>\hspace{1cm}\text{(by\;\eqref{raisso_inq})}\\
&\geq \left<((\Re A)^{-1}\sharp_{-r}\cos^2\alpha\;(\Re B)^{-1})Ax,Ax\right>\hspace{1cm}\text{(by Lemma\; \ref{RA})}\\
&=\cos\alpha\;\left<(\Re A((\Re A)^{-1}\sharp_{-r}(\Re B)^{-1})\Re A)x,x\right>\hspace{2cm}\text{(since $A=\Re A$)}\\
&=\cos\alpha\;\left<(\Re A\sharp_{r}\Re B)x,x\right>.
\end{align*}
This completes the proof.
\end{proof}

It is well known that if $A,B$ are positive matrices and $r\in(-1,0),$   then \cite[Theorem 2, page 129]{Furuta2}
\begin{align}\label{pos_nabla_-r_sharp}
(1-r)A+rB\leq A\sharp_{r} B.
\end{align} 
Next, we present an accretive verision of this inquality. 
 
\begin{theorem} Let $B\in\mathcal{S}_\alpha$ and $A\in\mathcal{M}_n^+$. Then for $r\in(-1,0),$
\begin{align}
\cos\alpha\;\Re((1-r)A+rB)\leq\Re (A\sharp_{r} B).
\end{align}
\end{theorem}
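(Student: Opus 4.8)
The plan is to combine the positive–matrix inequality \eqref{pos_nabla_-r_sharp} with the sectorial reverse already established in Theorem \ref{inve_Rsharp-r}, exactly as was done in the $r\in(1,2)$ regime. First I would rewrite the left–hand side in terms of real parts: since $A\in\mathcal{M}_n^{+}$ we have $\Re A=A$, and since $B\in\mathcal{S}_\alpha$ its real part $\Re B$ is positive; hence
\begin{align*}
\Re\big((1-r)A+rB\big)=(1-r)A+r\,\Re B=(1-r)\,\Re A+r\,\Re B.
\end{align*}

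Next I would apply the known inequality \eqref{pos_nabla_-r_sharp}, valid for positive matrices and $r\in(-1,0)$, to the positive pair $\Re A,\Re B$, obtaining
\begin{align*}
(1-r)\,\Re A+r\,\Re B\leq \Re A\,\sharp_{r}\,\Re B.
\end{align*}
Since $0\le\alpha<\frac{\pi}{2}$ gives $\cos\alpha>0$, multiplying by $\cos\alpha$ preserves the Löwner order, so
\begin{align*}
\cos\alpha\;\Re\big((1-r)A+rB\big)=\cos\alpha\big((1-r)\,\Re A+r\,\Re B\big)\leq \cos\alpha\,\big(\Re A\,\sharp_{r}\,\Re B\big).
\end{align*}
Finally, Theorem \ref{inve_Rsharp-r} (applicable because $B\in\mathcal{S}_\alpha$, $A\in\mathcal{M}_n^{+}$, $r\in(-1,0)$) yields $\cos\alpha\,(\Re A\,\sharp_{r}\,\Re B)\leq\Re(A\,\sharp_{r}B)$, and chaining the two displays completes the argument.

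I do not anticipate a genuine obstacle here: the statement is a routine concatenation of two earlier results. The only points that need care are bookkeeping ones — verifying that $\Re((1-r)A+rB)$ really equals $(1-r)\Re A+r\,\Re B$ (immediate from linearity of $\Re$), that the hypotheses of \eqref{pos_nabla_-r_sharp} are met by passing to the genuinely positive matrices $\Re A$ and $\Re B$, and that the scalar $\cos\alpha$ is nonnegative so that scaling respects the partial order. This is the exact mirror of the proof of the corresponding theorem for $r\in(1,2)$ that uses \eqref{pos_nabla_r_sharp} and Theorem \ref{inve_Rsharpr}.
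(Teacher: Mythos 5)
Your proposal is correct and follows exactly the same route as the paper: rewrite $\Re((1-r)A+rB)$ as $(1-r)\Re A+r\Re B$, apply the positive-matrix inequality \eqref{pos_nabla_-r_sharp} to $\Re A$ and $\Re B$, and then invoke Theorem \ref{inve_Rsharp-r}. No issues.
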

\begin{proof} By \eqref{pos_nabla_-r_sharp} and  Theorem \ref{inve_Rsharp-r}, we have 
\begin{align*}
\cos\alpha\;\Re((1-r)A+rB)=\cos\alpha\;((1-r)\Re A+r\Re B)\leq\cos\alpha\;(\Re A\sharp_{r} \Re B)\leq\Re (A\sharp_{r} B),
\end{align*}
completing the proof.
\end{proof}
Now we can present the accretive version of \eqref{Fujii}.
\begin{theorem}\label{phi_sharp} Let $B\in\mathcal{S}_\alpha $ and $A>0$ and let $\Phi$ be a positive unital linear map. Then for $r\in(-1,0),$ 
\begin{align}
\cos\alpha\;\Re(\Phi(A)\sharp_{r} \Phi(B))\leq\Re\Phi(A\sharp_{r} B).
\end{align}
\end{theorem}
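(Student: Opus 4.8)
The plan is to imitate exactly the structure of the proof of Theorem \ref{phi_sharp_r} from the case $r\in(1,2)$, chaining together the three results that are the $(-1,0)$-analogues of the ingredients used there. Concretely, the three facts I would combine are: Theorem \ref{inve_Rsharp-r} (which upgrades $\Re A\sharp_r\Re B$ to $\Re(A\sharp_r B)$ up to the factor $\cos\alpha$), Fujii's inequality \eqref{Fujii} for positive matrices and positive unital linear maps in the range $r\in(-1,0)$, and Theorem \ref{Rsharp_-r} (the Re-superadditivity reversal $\Re(A\sharp_r B)\le \Re A\sharp_r\Re B$ for accretive $A,B$).

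First I would apply Theorem \ref{inve_Rsharp-r} with the roles set so that $B\in\mathcal S_\alpha$ is the sectorial matrix and $A>0$ is positive: this gives
\[
\Re\Phi(A\sharp_r B)\ge \cos\alpha\,\bigl(\Re(\Phi(A))\sharp_r\Re(\Phi(B))\bigr),
\]
provided one knows that $\Phi(A)>0$ and $\Phi(B)\in\mathcal S_\alpha$ — the former is immediate since $\Phi$ is positive (and $A>0$ gives $\Phi(A)>0$ as $\Phi$ is unital, hence strictly positive), and the latter because positive linear maps preserve the numerical range inclusion $W(\Phi(B))\subseteq \overline{W(B)}\subseteq S_\alpha$, a standard fact. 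Next I would rewrite $\Re(\Phi(A))=\Phi(\Re A)$ and $\Re(\Phi(B))=\Phi(\Re B)$, using that $\Phi$ commutes with the adjoint (positive linear maps are $*$-linear), so that the right-hand side becomes $\cos\alpha\,\bigl(\Phi(\Re A)\sharp_r\Phi(\Re B)\bigr)$. Since $\Re A=A>0$ and $\Re B>0$, I can apply \eqref{Fujii} to get $\Phi(\Re A)\sharp_r\Phi(\Re B)\le \Phi(\Re A\sharp_r\Re B)$. Finally, Theorem \ref{Rsharp_-r} applied to the accretive pair $A,B$ gives $\Re A\sharp_r\Re B\ge \Re(A\sharp_r B)$, and applying the positive (hence order-preserving) map $\Phi$ yields $\Phi(\Re A\sharp_r\Re B)\ge \Phi(\Re(A\sharp_r B))=\Re\Phi(A\sharp_r B)$; wait — that direction is backwards, so instead I would use the chain in the order presented in the analogous Theorem \ref{phi_sharp_r}: start from $\Re\Phi(A\sharp_r B)$, bound it below by $\cos\alpha\,\Phi(\Re A\sharp_r\Re B)$ via Theorem \ref{inve_Rsharp-r} combined with $\Re(A\sharp_r B)\le \Re A\sharp_r\Re B$, then bound $\Phi(\Re A\sharp_r\Re B)$ below by $\Phi(\Re A)\sharp_r\Phi(\Re B)$ via \eqref{Fujii}, and identify this last quantity with $\Re(\Phi(A)\sharp_r\Phi(B))$.

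So the clean three-step chain, mirroring Theorem \ref{phi_sharp_r}, is:
\[
\Re\Phi(A\sharp_r B)\ \ge\ \cos\alpha\;\Phi(\Re A\sharp_r\Re B)\ \ge\ \cos\alpha\;\bigl(\Phi(\Re A)\sharp_r\Phi(\Re B)\bigr)\ \ge\ \cos\alpha\;\Re\bigl(\Phi(A)\sharp_r\Phi(B)\bigr),
\]
where the first inequality uses Theorem \ref{inve_Rsharp-r} (after noting $\Phi$ is positive and maps $\mathcal S_\alpha$ into itself, $\mathcal M_n^+$ into itself), the second uses Fujii's \eqref{Fujii}, and the third uses Theorem \ref{Rsharp_-r} applied to $\Phi(A),\Phi(B)$ (which are accretive since $A>0$ is positive and $B\in\mathcal S_\alpha$ implies $\Phi(B)\in\mathcal S_\alpha$). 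The main obstacle — really the only non-bookkeeping point — is verifying that $\Phi(B)\in\mathcal S_\alpha$ whenever $B\in\mathcal S_\alpha$ and $\Phi$ is a positive unital linear map; this follows because for a unit vector $x$, $\langle\Phi(B)x,x\rangle=\langle B\cdot,\cdot\rangle$ evaluated against the positive functional $y\mapsto \langle\Phi(|y\rangle\langle y|)\,\cdot,\cdot\rangle$-type state, so $\langle\Phi(B)x,x\rangle$ lies in the closed convex hull of $W(B)\subseteq S_\alpha$, hence in $S_\alpha$. Everything else is a direct substitution into already-proved statements.
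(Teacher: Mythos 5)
Your final three-step chain is exactly the paper's proof: apply Theorem \ref{inve_Rsharp-r} and the positivity of $\Phi$ to get $\Re\Phi(A\sharp_r B)\geq\cos\alpha\,\Phi(\Re A\sharp_r\Re B)$, then Fujii's inequality \eqref{Fujii}, then Theorem \ref{Rsharp_-r} applied to the accretive pair $\Phi(A),\Phi(B)$. The detour about $\Phi$ preserving $\mathcal S_\alpha$ is not needed for this chain (accretivity of $\Phi(B)$, i.e. $\Phi(\Re B)>0$, suffices), but the argument as finally stated is correct and essentially identical to the paper's.
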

\begin{proof} By Theorem \ref{inve_Rsharp-r} and then \eqref{Fujii}, we have 
\begin{align*}
\Re\Phi(A\sharp_{r} B)\geq\cos\alpha\;\Phi(\Re A\sharp_{r} \Re B)\geq \cos\alpha\;\Phi(\Re A)\sharp_{r} \Phi(\Re B).
\end{align*}
This, with Theorem \ref{Rsharp_-r}, yields
\begin{align*}
\Re\Phi(A\sharp_{r} B)\geq \cos\alpha\;\Re(\Phi(A)\sharp_{r} \Phi(B)),
\end{align*}
completing the proof.

When $B\in\mathcal{M}_n^+$, then $\alpha$ can be taken as $\alpha=0,$ which then retrieves \eqref{Fujii} as a special case of Theorem \ref{phi_sharp}.
\end{proof}
\begin{corollary} Let $B\in\mathcal{S}_\alpha $ and $A\in\mathcal{M}_n^+$ and let $\Phi$ be a positive unital linear map. Then, for any unitarily invariant norm $ |||\cdot|||$ and any $r\in(-1,0),$
\begin{align}
\cos^2\alpha\;|||\Phi(A)\sharp_{r} \Phi(B)|||\leq |||\Phi(A\sharp_{r} B)|||.
\end{align}

\end{corollary}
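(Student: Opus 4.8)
The plan is to derive this corollary directly from Theorem~\ref{phi_sharp}, exactly mirroring the proof of the analogous corollary in the $r\in(1,2)$ subsection. First I would recall that Theorem~\ref{phi_sharp} gives, under the stated hypotheses $B\in\mathcal{S}_\alpha$, $A>0$ and $\Phi$ positive unital linear, the operator inequality
\begin{align*}
\cos\alpha\;\Re(\Phi(A)\sharp_{r}\Phi(B))\leq\Re\Phi(A\sharp_{r}B),\qquad r\in(-1,0).
\end{align*}
Since $A>0$ (equivalently $A\in\mathcal{M}_n^+$) and $B\in\mathcal{S}_\alpha$, Proposition~\ref{sharp_-r_S} guarantees $A\sharp_r B\in\mathcal{S}_\alpha$, and one checks easily that $\Phi$ carries sectorial matrices to sectorial matrices with the same angle (because $\langle\Phi(X)y,y\rangle$ is a limit of convex combinations of values $\langle Xx,x\rangle$, hence lies in the convex cone $S_\alpha$ whenever $W(X)\subset S_\alpha$); thus $\Phi(A\sharp_r B)\in\mathcal{S}_\alpha$.

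Next I would apply a unitarily invariant norm $|||\cdot|||$ to both sides of the Theorem~\ref{phi_sharp} inequality. The two sides are positive semidefinite (being real parts of sectorial matrices, hence positive), so the order relation is preserved by $|||\cdot|||$, giving
\begin{align*}
\cos\alpha\;|||\Re(\Phi(A)\sharp_{r}\Phi(B))|||\leq|||\Re\Phi(A\sharp_{r}B)|||.
\end{align*}
Now invoke Lemma~\ref{norm} twice. On the left, since $\Phi(A)\sharp_r\Phi(B)\in\mathcal{S}_\alpha$ (again Proposition~\ref{sharp_-r_S} applied to the positive matrix $\Phi(A)$ and the sectorial matrix $\Phi(B)$), Lemma~\ref{norm} yields $\cos\alpha\;|||\Phi(A)\sharp_{r}\Phi(B)|||\leq|||\Re(\Phi(A)\sharp_{r}\Phi(B))|||$. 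On the right, since $\Phi(A\sharp_r B)\in\mathcal{S}_\alpha$, Lemma~\ref{norm} gives $|||\Re\Phi(A\sharp_{r}B)|||\leq|||\Phi(A\sharp_{r}B)|||$. Chaining these three inequalities produces
\begin{align*}
\cos^2\alpha\;|||\Phi(A)\sharp_{r}\Phi(B)|||\leq\cos\alpha\;|||\Re(\Phi(A)\sharp_{r}\Phi(B))|||\leq|||\Re\Phi(A\sharp_{r}B)|||\leq|||\Phi(A\sharp_{r}B)|||,
\end{align*}
which is the claimed bound.

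There is no serious obstacle here; the only point that requires a word of care is the justification that $\Phi$ preserves sectoriality (so that Lemma~\ref{norm} legitimately applies to $\Phi(A\sharp_r B)$), and that Proposition~\ref{sharp_-r_S} applies on both sides — to $A\sharp_r B$ and to $\Phi(A)\sharp_r\Phi(B)$. Both are immediate from the definitions, and in fact the paper already uses the same reasoning implicitly in the corresponding $r\in(1,2)$ corollary, so I would simply cite Theorem~\ref{phi_sharp}, Proposition~\ref{sharp_-r_S} and Lemma~\ref{norm} and present the three-line chain above.
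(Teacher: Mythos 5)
Your proposal is correct and follows essentially the same route as the paper: apply Theorem~\ref{phi_sharp}, use monotonicity of unitarily invariant norms on the positive real parts, and invoke Lemma~\ref{norm} on both sides to absorb the two factors of $\cos\alpha$. The only difference is that you spell out the sectoriality of $\Phi(A)\sharp_r\Phi(B)$ and $\Phi(A\sharp_r B)$ (via Proposition~\ref{sharp_-r_S} and the fact that $\Phi$ preserves $\mathcal{S}_\alpha$), which the paper leaves implicit.
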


\begin{proof} By Theorem \ref{phi_sharp} and  Lemma \ref{norm}, we have
\begin{align*}
\cos^2\alpha\;|||\Phi(A)\sharp_{r} \Phi(B)|||\leq\cos\alpha\;|||\Re(\Phi(A)\sharp_{r} \Phi(B))|||\leq |||\Re(\Phi(A\sharp_{r} B))|||\leq |||\Phi(A\sharp_{r} B)|||,
\end{align*}
completing the proof.
\end{proof}

Finally, we present the following numerical radius inequality.
\begin{theorem}Let $B\in\mathcal{S}_\alpha $ and $A>0$. Then for $r\in(-1,0),$ 
\begin{align}
\cos^6\alpha\; w^{-1}(A^{-2})w^{-(r+1)}(A)w^{r}(B)\leq w(A\sharp_{-r} B).
\end{align}
\end{theorem}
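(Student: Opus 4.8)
The plan is to mirror the structure of the analogous theorem in the $r\in(1,2)$ subsection (the one bounding $w(A\sharp_r B)$ from below), transporting everything through the substitution $r\mapsto -r$ and using the identity \eqref{-t} in place of Lemma \ref{r=2-r}. I would start from the quantity $w^{-1}(A\sharp_{-r}B)$ and apply Lemma \ref{wA^-1} to get $w^{-1}(A\sharp_{-r}B)\leq\sec^3\alpha\;w((A\sharp_{-r}B)^{-1})$, then rewrite $(A\sharp_{-r}B)^{-1}=A^{-1}\sharp_{-r}B^{-1}$ using the inversion identity for $\sharp_r$ noted in the introduction. Here I am implicitly using that $A\sharp_{-r}B\in\mathcal{S}_\alpha$ (Proposition \ref{sharp_-r_S}, applied with the roles of the two matrices as dictated by that statement, i.e. $B\in\mathcal{S}_\alpha$, $A>0$, and $-r\in(-1,0)$ when $r\in(0,1)$ — note that the theorem's hypothesis $r\in(-1,0)$ means $-r\in(0,1)$, so I should instead invoke Proposition \ref{sharp_r_S} or simply re-derive sectoriality directly; I would double-check which of the two propositions applies once I have fixed the sign conventions).

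Next I would expand $A^{-1}\sharp_{-r}B^{-1}$ via \eqref{-t}: since $-r\in(0,1)$ is \emph{not} in $(-1,0)$, I instead write $A^{-1}\sharp_{-r}B^{-1}$ directly and pass to $B^{-1}(A\sharp_{?}B)^{-1}B^{-1}$-type expressions. The cleaner route: observe $A^{-1}\sharp_{-r}B^{-1}=(A\sharp_{-r}B)^{-1}$ already, so I want to bound $w(A\sharp_{-r}B)^{-1}$ — no, that is circular. The correct move, following the $(1,2)$-case template verbatim, is to use \eqref{-t} in the form $A^{-1}\sharp_{-r}B^{-1}=A^{-1}(A\sharp_{r}B)A^{-1}$ when $-r\in(-1,0)$, but our exponent $-r$ lies in $(0,1)$. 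So here one should instead use the \emph{positive}-matrix case: with $A>0$, conjugation is harmless, and $A^{-1}\sharp_{-r}B^{-1}=A^{-1}\sharp_{-r}B^{-1}$ can be handled by factoring out $A^{-1}$ through $A^{-1}\sharp_sB^{-1}=A^{-1/2}(A^{1/2}B^{-1}A^{1/2})^sA^{-1/2}$ and absorbing $A^{\pm1/2}$ into the vectors, exactly as in the $(1,2)$ proof where $B^{-1}(\cdots)B^{-1}$ was split off as $\|B^{-1}x\|^2\cdot w(\cdots)$.

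Concretely, I would write, for $\|x\|=1$,
\begin{align*}
\left|\left\langle (A^{-1}\sharp_{-r}B^{-1})x,x\right\rangle\right|
&=\left|\left\langle (A\sharp_{1+r}B)^{-1}A^{-1}x,A^{-1}x\right\rangle\right|\\
&\leq\|A^{-1}x\|^2\,w(A\sharp_{1+r}B)\\
&\leq w(A^{-2})\,w(A\sharp_{1+r}B),
\end{align*}
where the first line uses $A^{-1}\sharp_{-r}B^{-1}=A^{-1}(A^{-1}\sharp_{1+r}B^{-1})^{-1}\cdot{}$-type manipulation analogous to Lemma \ref{r=2-r} (again to be pinned down with care about whether the relevant identity is \eqref{-t} or Lemma \ref{r=2-r}; the exponent bookkeeping $-r\leftrightarrow 1+r$ or $-r\leftrightarrow 2-(1+r)$ is exactly the routine part I am deferring). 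Then Lemma \ref{nume_sharp_inq} gives $w(A\sharp_{1+r}B)\leq\sec^3\alpha\;w^{-r}(A)w^{1+r}(B)$ since $1+r\in(0,1)$, and combining everything yields $w^{-1}(A\sharp_{-r}B)\leq\sec^6\alpha\;w(A^{-2})w^{-r}(A)w^{1+r}(B)$ — no wait, I should expect to land on $\sec^6\alpha\;w(A^{-2})w^{r+1}(A)w^{-r}(B)$ after the dust settles, which upon inverting produces the claimed $\cos^6\alpha\;w^{-1}(A^{-2})w^{-(r+1)}(A)w^{r}(B)\leq w(A\sharp_{-r}B)$. The main obstacle is purely organizational: getting the three exponent substitutions consistent (which power of $\sharp$ appears after applying the reflection identity, and matching it to the exponents in the target inequality), together with confirming the correct sectoriality proposition and the correct reflection identity to cite; once those are fixed the chain of inequalities is a direct transcription of the $r\in(1,2)$ argument.
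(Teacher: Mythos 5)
Your overall plan --- mirror the $(1,2)$-case argument: sectoriality of the mean, Lemma \ref{wA^-1}, the inversion identity, the reflection identity \eqref{-t}, splitting off $\|A^{-1}x\|^2\leq w(A^{-2})$, and finally Lemma \ref{nume_sharp_inq} --- is exactly the paper's route. But the proposal as written does not close, and the places where it fails are precisely the steps you defer as ``routine bookkeeping.'' The root cause is that the printed right-hand side $w(A\sharp_{-r}B)$ is a typo for $w(A\sharp_{r}B)$: testing $A=aI$, $B=bI$ with $0<b<a$ gives $w(A\sharp_{-r}B)=a^{1+r}b^{-r}$ while the left side equals $a^{1-r}b^{r}$, and the claimed inequality fails since $r<0$; with $\sharp_r$ instead one gets equality. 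The paper's proof accordingly estimates $w^{-1}(A\sharp_{r}B)$, not $w^{-1}(A\sharp_{-r}B)$. By taking the subscript literally you set out to prove a false statement, which is why your exponents never ``settle'': your own chain ends at $w(A\sharp_{-r}B)\geq\cos^6\alpha\,w^{-1}(A^{-2})w^{r}(A)w^{-(1+r)}(B)$, which is not the asserted inequality, and waving at ``the dust settling'' cannot repair that.

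Two further concrete errors in the one computation you do display. First, the identity $A^{-1}\sharp_{-r}B^{-1}=A^{-1}(A\sharp_{1+r}B)^{-1}A^{-1}$ is false: writing $C=A^{-1/2}BA^{-1/2}$ one has $A^{-1}\sharp_{-r}B^{-1}=A^{-1/2}C^{r}A^{-1/2}$ while $A^{-1}(A\sharp_{1+r}B)^{-1}A^{-1}=A^{-3/2}C^{-(1+r)}A^{-3/2}$. The identity you actually need, obtained from \eqref{-t} by replacing $A,B$ with $A^{-1},B^{-1}$, is $A^{-1}\sharp_{r}B^{-1}=A^{-1}(A\sharp_{-r}B)A^{-1}$ for $r\in(-1,0)$ --- note it converts $\sharp_r$ into $\sharp_{-r}$ with $-r\in(0,1)$, which is exactly the range where Lemma \ref{nume_sharp_inq} applies. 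Second, in the same display you bound $\left|\left\langle (A\sharp_{1+r}B)^{-1}A^{-1}x,A^{-1}x\right\rangle\right|$ by $\|A^{-1}x\|^{2}w(A\sharp_{1+r}B)$, silently dropping the inverse. The correct chain is: $A\sharp_rB\in\mathcal{S}_\alpha$ by Proposition \ref{sharp_-r_S}, so Lemma \ref{wA^-1} gives $w^{-1}(A\sharp_rB)\leq\sec^3\alpha\,w(A^{-1}\sharp_rB^{-1})$; then $A^{-1}\sharp_rB^{-1}=A^{-1}(A\sharp_{-r}B)A^{-1}$, the $A^{-1}$'s (Hermitian, since $A>0$) move onto the vectors to give the factor $w(A^{-2})$, and Lemma \ref{nume_sharp_inq} with exponent $-r\in(0,1)$ yields $w(A\sharp_{-r}B)\leq\sec^3\alpha\,w^{1+r}(A)w^{-r}(B)$; inverting gives $w(A\sharp_rB)\geq\cos^6\alpha\,w^{-1}(A^{-2})w^{-(r+1)}(A)w^{r}(B)$.
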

\begin{proof} Let $x\in\mathbb{C}^n$ be a unit vector. Then
\begin{align*}
\displaystyle
w^{-1}(A\sharp_{r} B)&\leq \sec^3\alpha\;w((A\sharp_{r} B)^{-1})=\sec^3\alpha\;w(A^{-1}\sharp_{r} B^{-1})\hspace{0.5cm} \text{(by Lemma\;\ref{wA^-1})}\\
&=\sec^3\alpha\;\max_{\|x\|=1}\left|\left<(A^{-1}\sharp_{r}B^{-1})x,x\right>\right|\\
&=\sec^3\alpha\;\max_{\|x\|=1}\left|\left<A^{-1}(A^{-1}\sharp_{-r}B^{-1})^{-1}A^{-1}x,x\right>\right|\\
&=\sec^3\alpha\;\max_{\|x\|=1}\left|\left<A^{-1}(A\sharp_{-r}B)A^{-1}x,x\right>\right|\\
&=\sec^3\alpha\;\max_{\|x\|=1}\left\lbrace \|A^{-1}x\|^2\left|\left<(A\sharp_{-r}B)\dfrac{A^{-1}x}{\|A^{-1}x\|},\dfrac{A^{-1}x}{\|A^{-1}x\|}\right>\right|\right\rbrace \\
&\leq\sec^3\alpha\;\max_{\|x\|=1}\left<A^{-2}x,x\right>\;\max_{\|x\|=1}\left|\left<(A\sharp_{-r}B)\dfrac{A^{-1}x}{\|A^{-1}x\|},\dfrac{A^{-1}x}{\|A^{-1}x\|}\right>\right|\\
&=\sec^3\alpha\; w(A^{-2})w(A\sharp_{-r}B)\\
&\leq\sec^6\alpha\; w(A^{-2})w^{1+r}(A)w^{-r}(B),\hspace{2.5cm} \text{(by Lemma\;\ref{nume_sharp_inq})}
\end{align*}
which completes the proof.
\end{proof}

{\tiny \vskip 1 true cm }
{\tiny (Y. Bedrani) Department of Mathematics, The University of Jordan, Amman, Jordan. 

\textit{E-mail address:} \bf{yacinebedrani9@gmail.com}}
{\tiny \vskip 0.3 true cm }
 {\tiny (F. Kittaneh) Department of Mathematics, The University of Jordan, Amman, Jordan. 
 
  \textit{E-mail address:} \textit{E-mail address:} \bf{fkitt@ju.edu.jo}}
 {\tiny \vskip 0.3 true cm }
{\tiny (M. Sababheh) Dept. of Basic Sciences, Princess Sumaya University for Tech., Amman 11941, Jordan.
 
\textit{E-mail address:} \bf{sababheh@psut.edu.jo}}

 {\tiny \vskip 0.3 true cm }

\end{document}